\chardef\bslash=`\\ 
\def\verbatim{\interlinepenalty\@M \@verbatim
  \leftskip\@totalleftmargin\advance\leftskip2pc
  \frenchspacing\@vobeyspaces \@xverbatim}
\newtheorem{thm}{Theorem}[section]
\newtheorem{cor}[thm]{Corollary}
\newtheorem{lem}[thm]{Lemma}
\newtheorem{pro}[thm]{Proposition}
\theoremstyle{definition}
\newtheorem{defin}{Definition}[section]
\theoremstyle{remark}
\numberwithin{equation}{section}
\begin{document}


\title
{$Z$-set unknotting in uncountable products of reals}
\author{A. Chigogidze}
\address{Department of Mathematics and Statistics,
University of North Carolina at Greensboro,
Greensboro, NC, 27402, USA}
\email{chigogidze@uncg.edu}
\keywords{$Z$-set, fibered $Z$-set, uncountable product}
\subjclass{Primary: 54C20; Secondary: 57N20.}


\begin{abstract}{We prove a version of $Z$-set unknotting theorem for uncountable products of real numbers. }
\end{abstract}

\maketitle
\markboth{A.~Chigogidze}
{$Z$-set unknotting}

\section{Introduction}\label{S:intro}
Every homeomorphism between $Z$-sets of the countable (infinite) power $R^{\omega}$ of the real line can be extended to an autohomeomorphism of $R^{\omega}$. This result is widely known and often used in infinite-dimensional topology \cite{bp}. Parametric version of this statement is also valid (and used) even though its proof (attributed to H.Toru\'{n}czyk) has never been published. In this paper we introduce, for any infinite cardinal $\tau$, a concept of $Z_{\tau}$-set and prove, using Toru\'{n}czyk's theorem, a fibered version of $Z_{\tau}$-unknotting theorem for uncountable power $R^{\tau}$ of the real line (Theorem \ref{T:ztauunknottingsigmaf}).  As a corollary we obtain a $Z_{\tau}$-unknotting theorem (Corollary \ref{C:1}) for $R^{\tau}$. Similar results (Theorem \ref{T:ztauunknottingsigmafI} and Corollary \ref{C:1I}) are valid for the Tychonov cube $I^{\tau}$ as well. Using our approach we positively answer (Corollary \ref{C:2}) question from \cite{vds} whether any homeomorphism between closed $C$-embedded $\sigma$-compact subsets of $R^{\tau}$ admits an extension to an autohomeomorphism of $R^{\tau}$.

\section{Preliminaries}\label{S:pre}
We refer reader to \cite{chibook} for needed facts about inverse spectra -- in particular, for various versions of the Spectral Theorem. Properties of $C$-embedded subsets also can be found there.

Here we collect certain facts from infinite-dimensional topology which will be needed below. By $\operatorname{cov}(X)$ we denote the collection of all countable functionally open covers of a space $X$.

\begin{defin}\label{D:z-set}
A closed subset $Z$ of a space $X$ is a $Z$-set if for each ${\mathcal U} \in \operatorname{cov}(X)$ there exists a map $f \colon X \to X$ such that $\operatorname{cl}(f(X)) \cap Z = \emptyset$ and $f$ is ${\mathcal U}$-close to $\operatorname{id}_{X}$.
\end{defin}

\begin{defin}\label{D:fz_set}
Let $p \colon X \to Y$ be a map. A closed subset $Z$ of $X$ is a fibered $Z$-set (with respect to $p$) if for each ${\mathcal U} \in \operatorname{cov}(X)$ there exists a map $f \colon X \to X$ such that $\operatorname{cl}(f(X)) \cap Z = \emptyset$, $pf = p$ and $f$ is ${\mathcal U}$-close to $\operatorname{id}_{X}$.
\end{defin}

As noted above, the following result is due to H.~Toru\'{n}czyk. 

\begin{thm}\label{T:fz-setunknottingR}
Let $Z$ and $F$ be fibered $Z$-sets in $R^{\omega}\times R^{\omega}$ with respect to the projection $\pi_{1} \colon R^{\omega} \times R^{\omega} \to R^{\omega}$. Then every homeomorphism $h \colon Z \to F$ such that $\pi_{1}h = \pi_{1}|Z$ extends to a homeomorphism $H \colon R^{\omega}\times R^{\omega} \to R^{\omega}\times R^{\omega}$ such that $\pi_{1}H = \pi_{1}$.
\end{thm}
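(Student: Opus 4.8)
The plan is to reduce the theorem to two ingredients: a fiber-preserving homeomorphism extension lemma (a fibered ``mapping replacement'' result for $Z$-sets) and an infinite-swindle limiting argument. The essential structural fact I would exploit is the stability $R^{\omega} \times R^{\omega} \cong R^{\omega}$ together with the fact that $s = R^{\omega}$ admits a complete metric whose group of homeomorphisms is closed under suitably controlled infinite compositions; this is what will let an infinite sequence of corrections converge to an actual homeomorphism rather than a mere map. Throughout, every auxiliary map is required to commute with $\pi_{1}$, so that the invariance $\pi_{1}H = \pi_{1}$ is automatic in the limit.

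First I would establish the fibered extension lemma: given the fibered $Z$-sets $Z$ and $F$ and a fiber-preserving embedding $g \colon Z \to R^{\omega} \times R^{\omega}$ onto a fibered $Z$-set that is $\mathcal{U}$-close to the inclusion for a sufficiently fine $\mathcal{U} \in \operatorname{cov}(R^{\omega} \times R^{\omega})$, one can produce an ambient homeomorphism $G \colon R^{\omega} \times R^{\omega} \to R^{\omega} \times R^{\omega}$ with $G|Z = g$, $\pi_{1}G = \pi_{1}$, and $G$ itself controlled in size. The construction uses the defining property of a fibered $Z$-set to obtain fiber-preserving pushes off $Z$ and off $F$, and uses the second $R^{\omega}$ factor as the room into which the correction is absorbed; because each push $f$ satisfies $pf = p$, the extension, built coordinatewise over the base, commutes with $\pi_{1}$.

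Next I would run the swindle. Starting from the inclusions $Z \hookrightarrow R^{\omega} \times R^{\omega} \hookleftarrow F$ and the homeomorphism $h$, I alternately apply the extension lemma to successive small corrections built from $h$ and $h^{-1}$, choosing the covers in the fibered $Z$-set definitions fine enough at each stage that the resulting sequence of fiber-preserving ambient homeomorphisms has a convergent infinite composition. The back-and-forth bookkeeping forces the limit $H$ to restrict to $h$ on $Z$; the controlled sizes guarantee that $H$ and its inverse are both well defined and continuous, hence that $H$ is a homeomorphism; and fiber-preservation of every factor yields $\pi_{1}H = \pi_{1}$.

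The main obstacle will be the fibered extension lemma together with the convergence control. In the non-parametric setting the extension of a small $Z$-embedding to an ambient homeomorphism is classical, but here every push and every correction must respect $\pi_{1}$, which removes the freedom to move points in the base direction; all the maneuvering room must come from the second $R^{\omega}$ factor. The delicate point is to engineer these fiber-preserving pushes so that their sizes can be made summably small while still extending the prescribed corrections, so that the infinite composition converges to a homeomorphism and not merely to a continuous surjection.
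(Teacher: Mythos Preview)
The paper does not contain a proof of this theorem. It is explicitly introduced as a result ``due to H.~Toru\'{n}czyk'' whose proof ``has never been published,'' and is used throughout as a black box (most notably inside the proof of Lemma~\ref{L:3}). There is therefore nothing in the paper to compare your argument against.

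That said, your outline follows the expected route for a Toru\'{n}czyk-style fibered unknotting theorem: a local fibered $Z$-embedding extension lemma combined with an Anderson--McCoy back-and-forth convergence scheme, with all moves confined to the second $R^{\omega}$ factor so that $\pi_{1}$ is preserved. As a plan this is reasonable, but as written it is a sketch rather than a proof: the crucial ``fibered extension lemma'' is asserted, not established, and its proof is where essentially all of the work lies. In particular, you would need to supply the fibered analogue of the homeomorphism extension theorem for $Z$-sets (with uniform control over fibers), verify that the small-correction extensions can be chosen with summable moduli so that the infinite composition converges in the complete metric on $R^{\omega}$, and check that the limit is bijective. None of this is in your proposal beyond a statement of intent, so while the strategy is correct, the substantive content is still missing.
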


A counterpart of the above statement for Hilbert cube fibrations appears in \cite{tw}.

\begin{thm}\label{T:fz-setunknottingI}
Let $Z$ and $F$ be fibered $Z$-sets in $I^{\omega}\times I^{\omega}$ with respect to the projection $\pi_{1} \colon I^{\omega} \times I^{\omega} \to I^{\omega}$. Then every homeomorphism $h \colon Z \to F$ such that $\pi_{1}h = \pi_{1}|Z$ extends to a homeomorphism $H \colon I^{\omega}\times I^{\omega} \to I^{\omega}\times I^{\omega}$ such that $\pi_{1}H = \pi_{1}$.
\end{thm}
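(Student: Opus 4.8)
The plan is to follow the template of Toru\'{n}czyk's $R^{\omega}$-version (Theorem \ref{T:fz-setunknottingR}), but with $\ell^{2}$-manifold techniques replaced by Hilbert-cube-manifold ones, and to realize the extension $H$ as the uniform limit of a convergent sequence of fiber-preserving homeomorphisms of $I^{\omega}\times I^{\omega}$. The engine throughout is the classical ($Z$-set) unknotting theorem in the Hilbert cube $Q=I^{\omega}$: any homeomorphism between $Z$-sets of $Q$ that is $\mathcal{U}$-homotopic to the inclusion extends to a homeomorphism of $Q$ controlled by $\mathcal{U}$. Since $\pi_{1}\colon I^{\omega}\times I^{\omega}\to I^{\omega}$ is a trivial $Q$-bundle and $h$ is fiber-preserving, the real problem is to perform this classical extension simultaneously in all fibers $\{b\}\times I^{\omega}$, $b\in I^{\omega}$, so that the resulting family depends continuously on the base point $b$ --- equivalently, so that $\pi_{1}H=\pi_{1}$.

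First I would put the pair $(Z,F)$ into standard position. Using that $Z$ and $F$ are fibered $Z$-sets, a fibered general-position (mapping-replacement) argument produces a fiber-preserving ambient homeomorphism carrying $F$ into a fixed standard fibered $Z$-set, for instance the slice $I^{\omega}\times(\{0\}\times I^{\omega})$, which carries a fiber-preserving product collar; one does the same for $Z$. Because the pushing-off maps in the definition of a fibered $Z$-set satisfy $pf=p$, every such reduction can be taken fiber-preserving, so it suffices to prove the theorem after $F$ (and, if convenient, $Z$) have been normalized, and the normalizing homeomorphisms compose with the eventual extension without destroying the condition $\pi_{1}H=\pi_{1}$.

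The technical core is a parametrized unknotting lemma with estimates: if $h$ is a fiber-preserving homeomorphism between fibered $Z$-sets which is fiber-preservingly $\mathcal{U}$-close to a fiber-preserving homeomorphism already defined on the ambient space, then $h$ extends to a fiber-preserving homeomorphism that is $\mathcal{V}$-close to that map, with $\mathcal{V}$ controlled by $\mathcal{U}$. I would prove this by an Eilenberg (infinite) swindle carried out in a Hilbert-cube factor: writing $Q\cong Q\times Q$ fiberwise, one absorbs the discrepancy between $h$ and the given map using the spare $Q$-coordinate, exactly as in the non-fibered argument, but performed over the whole base at once. The product decomposition $I^{\omega}\times I^{\omega}\cong I^{\omega}\times(I^{\omega}\times I^{\omega})$ respects $\pi_{1}$, so each stage of the swindle is automatically fiber-preserving, and the fibered $Z$-set hypothesis supplies the uniform, base-independent push-off maps that make the estimates hold simultaneously in every fiber.

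Finally I would assemble $H$ by iteration. Since each fiber is a Hilbert cube, $h$ is fiber-preservingly homotopic to the inclusion of $Z$ through arbitrarily small fiber-preserving moves (take the homotopy induced by a fiberwise contraction), so $h$ may be factored into countably many small steps. Applying the lemma to each step yields a sequence $H_{n}$ of fiber-preserving homeomorphisms with $H_{n}|Z\to h$ and with $H_{n}$, together with $H_{n}^{-1}$, uniformly Cauchy; the standard limit criterion for convergent sequences of homeomorphisms then gives a fiber-preserving homeomorphism $H=\lim H_{n}$ restricting to $h$ on $Z$. The main obstacle is exactly the simultaneous control demanded here: one must keep every step (a) fiber-preserving, (b) small enough, uniformly over all $b\in I^{\omega}$, that both $H_{n}$ and $H_{n}^{-1}$ converge, so that the limit is an honest homeomorphism rather than merely a near-homeomorphism, and (c) convergent to $h$ precisely on $Z$. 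It is at (b) that the distinction between a \emph{fibered} $Z$-set and a mere fiberwise $Z$-set becomes essential, since only the former furnishes the base-uniform estimates required for convergence.
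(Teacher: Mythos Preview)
The paper does not prove this theorem at all: it is stated as a known result, with the sentence ``A counterpart of the above statement for Hilbert cube fibrations appears in \cite{tw}'' immediately preceding it, and is then used as a black box (in the proof of Lemma~\ref{L:3}, via the remark in Subsection~\ref{SS:compact}). So there is no ``paper's own proof'' to compare your proposal against; the paper simply cites Toru\'{n}czyk--West.

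Your sketch is a plausible outline of how the Toru\'{n}czyk--West argument goes---normalize via fibered general position, prove a controlled fiber-preserving unknotting lemma, and pass to a limit---and it correctly identifies the crucial point that the \emph{fibered} $Z$-set hypothesis is what provides base-uniform estimates. But note that what you have written is a plan rather than a proof: the ``parametrized unknotting lemma with estimates'' and the fibered mapping-replacement step each require substantial work (this is essentially the content of the cited memoir), and the swindle you describe needs careful bookkeeping to ensure the approximating homeomorphisms and their inverses are simultaneously Cauchy. For the purposes of this paper, however, none of that is needed: you may simply cite \cite{tw}, as the author does.
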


Recall that a map $f \colon X \to Y$ of Polish spaces is soft if for any Polish space $B$, its closed subset $A \subset B$, and any two maps $g \colon A \to X$, $h \colon B \to Y$ with $fg = h|A$, there exists a map $k \colon B \to X$ such that $g = k|A$ and $fk = h$. In general case (of non-metrizable spaces) we refer reader to \cite[Definition 6.1.12]{chibook}.

The following statement will be used below.

\begin{pro}\label{P:zsetsection}
Let ${\mathcal S} = \{ X_{n}, p^{n+1}_{n},\omega\}$ be an inverse sequence consisting of Polish spaces and soft projections and $Z$ be a closed subset of $\lim{\mathcal S}$. Suppose that for each $n \in \omega$ there exists a section $s_{n}^{n+1} \colon X_{n} \to X_{n+1}$ of the projection $p_{n}^{n+1} \colon X_{n+1} \to X_{n}$ such that $s_{n}^{n+1}(X_{n}) \cap p_{n+1}(Z) = \emptyset$. Then $Z$ is a fibered $Z$-set in $X$ with respect to the projection $p_{0} \colon X \to X_{0}$.
\end{pro}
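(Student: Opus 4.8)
The plan is to work metrically. Since each $X_{n}$ is Polish and the indexing is countable, $X=\lim{\mathcal S}$ is a closed subspace of $\prod_{n}X_{n}$ and hence itself Polish; fix a compatible metric $d(x,y)=\sum_{n}2^{-n}d_{n}(x_{n},y_{n})$ with each $d_{n}\le 1$. For a metric space every open cover admits a continuous positive ``Lebesgue function'', so verifying Definition \ref{D:fz_set} reduces to the following: for every continuous $\varepsilon\colon X\to(0,1]$ I must produce a map $f\colon X\to X$ with $p_{0}f=p_{0}$, with $d(x,f(x))<\varepsilon(x)$ for all $x$, and with $\operatorname{cl}(f(X))\cap Z=\emptyset$.

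The elementary building blocks are the ``iterated--section'' maps. For $N\in\omega$ define $f_{N}\colon X\to X$ by $(f_{N}(x))_{m}=x_{m}$ for $m\le N$ and $(f_{N}(x))_{m}=\bigl(s^{m}_{m-1}\circ\cdots\circ s^{N+1}_{N}\bigr)(x_{N})$ for $m>N$; each coordinate is continuous and the compatibility $p^{m}_{m-1}((f_{N}(x))_{m})=(f_{N}(x))_{m-1}$ holds by construction, so $f_{N}$ is a well defined map into $X$. It fixes the coordinates $0,\dots,N$, in particular $p_{0}f_{N}=p_{0}$, and $d(x,f_{N}(x))\le 2^{-N+1}$. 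Crucially, $(f_{N}(x))_{N+1}\in s^{N+1}_{N}(X_{N})$, and since a section of a map between Hausdorff spaces has closed image (it is the equalizer of $\operatorname{id}$ and $s^{N+1}_{N}p^{N+1}_{N}$), the set $p_{N+1}^{-1}\bigl(s^{N+1}_{N}(X_{N})\bigr)$ is closed and, by the hypothesis $s^{N+1}_{N}(X_{N})\cap p_{N+1}(Z)=\emptyset$, disjoint from $Z$; hence $\operatorname{cl}(f_{N}(X))\cap Z=\emptyset$ already.

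A single $f_{N}$ will not be $\varepsilon$-close when $\varepsilon$ is small in places, so the next step is to interpolate, and this is exactly where softness enters. Using softness of the bonding map $p^{N+1}_{N}$ applied to the Polish pair $\bigl(X\times[0,1],\,X\times\{0,1\}\bigr)$, I lift the boundary data $x\mapsto x_{N+1}$ and $x\mapsto s^{N+1}_{N}(x_{N})$ over the constant-in-$t$ map $x\mapsto x_{N}$ to obtain a homotopy of the $(N+1)$-st coordinate inside the fibre of $p^{N+1}_{N}$; re-applying the sections $s^{N+2}_{N+1},s^{N+3}_{N+2},\dots$ above it yields a fibre-preserving homotopy $G^{N}$ from $f_{N+1}$ to $f_{N}$ whose $(N+2)$-nd coordinate stays in the closed set $s^{N+2}_{N+1}(X_{N+1})$, so the entire track of $G^{N}$ lies in the closed set $C_{N}:=p_{N+2}^{-1}\bigl(s^{N+2}_{N+1}(X_{N+1})\bigr)$, which is disjoint from $Z$. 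Concatenating the $G^{N}$ for $N\ge N_{0}$ gives one homotopy $\Gamma\colon X\times[N_{0},\infty]\to X$ with $\Gamma(\,\cdot\,,N)=f_{N}$ and $\Gamma(\,\cdot\,,\infty)=\operatorname{id}$ (continuity at $\infty$ because $d(\Gamma(x,s),x)\le 2^{-s+1}$). Finally I set $f(x)=\Gamma(x,\lambda(x))$ for a continuous $\lambda\colon X\to[N_{0},\infty)$ chosen with $2^{-\lambda(x)+1}<\varepsilon(x)$, e.g. $\lambda=\max(N_{0},\,2-\log_{2}\varepsilon)$.

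Closeness and $p_{0}f=p_{0}$ are then immediate, and I expect the genuine difficulty to be the \emph{closed}-image requirement $\operatorname{cl}(f(X))\cap Z=\emptyset$ under a point-varying level. I would settle it by a dichotomy on a sequence with $f(x_{k})\to z\in Z$: if $\lambda(x_{k})$ is bounded, say by $M$, then all $f(x_{k})$ lie in the finite union $\bigcup_{N_{0}\le N\le M}C_{N}$ of closed sets missing $Z$, so $z\notin Z$, a contradiction; if $\lambda(x_{k})\to\infty$ then $\varepsilon(x_{k})\to0$ and $d(x_{k},f(x_{k}))<\varepsilon(x_{k})\to0$ force $x_{k}\to z$, whence $\varepsilon(x_{k})\to\varepsilon(z)>0$, again a contradiction. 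The point of the whole scheme is thus that the softness hypothesis supplies fibre-preserving homotopies trapped in closed $Z$-avoiding sets, while the strict positivity of the control function $\varepsilon$ near the closed set $Z$ is what converts mere pointwise avoidance into closure avoidance.
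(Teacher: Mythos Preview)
Your argument is correct and follows the same overall plan as the paper: reduce to a continuous control function, use softness to interpolate fibrewise between the identity and the iterated-section maps, and settle the closure condition by the dichotomy that either the levels stay bounded (so all images lie in finitely many closed $Z$-avoiding sets) or the control values tend to $0$, forcing $x_{k}\to z$ and contradicting $\varepsilon(z)>0$.

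The only substantive difference is in how the interpolation is manufactured. The paper embeds each $X_{n+1}$ closedly in $R^{\omega}$, interpolates there via a fixed map $h\colon R^{\omega}\times R^{\omega}\times[0,\infty)\to R^{\omega}$, and then uses softness to retract $X_{n}\times R^{\omega}$ back onto $X_{n+1}$, assembling $g_{\alpha}$ level by level through an explicit recursion $g_{n+1}=r_{n+1}(g_{n}\triangle h(\cdots))$. You instead apply softness of $p^{N+1}_{N}$ directly to the Polish pair $(X\times[0,1],\,X\times\{0,1\})$ to obtain genuine fibrewise homotopies $G^{N}$ between consecutive $f_{N}$'s, concatenate them into one $\Gamma$, and evaluate at $\lambda(x)$. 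Your route is a bit more intrinsic---no auxiliary $R^{\omega}$---and it makes the closed track-containers $C_{N}=p_{N+2}^{-1}\bigl(s^{N+2}_{N+1}(X_{N+1})\bigr)$ explicit, which cleanly handles the bounded-$\lambda$ case; the paper's route trades this for a single closed-form recursion and avoids the bookkeeping of gluing infinitely many homotopies. One small point worth making explicit in your write-up: the implication ``$\lambda(x_{k})\to\infty\Rightarrow\varepsilon(x_{k})\to 0$'' uses your specific choice $\lambda=\max(N_{0},\,2-\log_{2}\varepsilon)$, not merely the inequality $2^{-\lambda+1}<\varepsilon$.
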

\begin{proof}
The limit projection $p_{0} \colon X\to X_{0}$ is a soft map and $X$ is a Polish space. For each $n \in \omega$ equip $X_{n}$ with a metric $d_{n}$ bounded by $2^{-n}$. On $X$ consider the metric, defined as follows

\[ d(\{ x_{n}\},\{ x_{n}^{\prime}\}) = \max\{ d_{n}(x_{n},x_{n}^{\prime})\colon n \in \omega\} .\]

Since $R^{\omega}$ is an absolute retract there exists a map $h \colon R^{\omega} \times R^{\omega} \times [0,\infty) \to R^{\omega}$ such that $h(a,b,t) = a$ for each $t \leq 1$ and $h(a,b,t) = b$ for each $t \geq 2$.

For each $n \in \omega$ let $i_{n+1} \colon X_{n+1} \hookrightarrow R^{\omega}$ be a closed embedding and note that the diagonal product $p_{n}^{n+1}\triangle i_{n+1} \colon X_{n+1} \to X_{n} \times R^{\omega}$ is also a closed embedding. Consequently, softness of the projection $p_{n}^{n+1} \colon X_{n+1} \to X_{n}$ guarantees existence of a map $r_{n+1} \colon X_{n}\times R^{\omega} \to X_{n+1}$ such that $p_{n}^{n+1}r_{n+1} = \pi_{X_{n}}$ and $r_{n+1}|(p_{n}^{n+1}\triangle i_{n+1})(X_{n+1}) = (p_{n}^{n+1}\triangle i_{n+1})^{-1}$, where $\pi_{X_{n}} \colon X_{n} \times R^{\omega} \to X_{n}$ is the projection.

Our goal is to construct, for each $\alpha \colon X \to (0,1)$, a map $g_{\alpha} \colon X \to X$ such that 
\begin{itemize}
  \item[(a)]
  $p_{0}g_{\alpha} = p_{0}$;
  \item[(b)] 
  $d(x,g_{\alpha}(x)) \leq \alpha(x)$;
  \item[(c)]
  $\operatorname{cl}(g_{\alpha}(X)) \cap Z = \emptyset$. 
\end{itemize}   
The map $g_{\alpha}$ is constructed as the limit $g_{\alpha} = \lim\{ g_{n} \colon n \in \omega\}$ of maps $g_{n} \colon X \to X_{n}$.

Let $g_{0} = p_{0}$ and suppose that maps for each $i$, with $0 \leq i \leq n$, we have already constructed a map $g_{i} \colon X \to X_{i}$ so that 

\begin{itemize}
  \item[(1)$_{i}$]
   $p^{i}_{i-1}g_{i} = g_{i-1}$;
  \item[(2)$_{i}$] 
  If $\alpha(x) \leq 2^{-i}$, then $g_{i}(x) = p_{i}(x)$
  \item[(3)$_{i}$]
  If $\alpha(x) \geq 2^{-(i-1)}$, then $g_{i}(x) = s_{i-1}^{i}(p_{i-1}(x))$;
\end{itemize}  
 
Next we define the map $g_{n+1}$ by letting

\[ g_{n+1} = r_{n+1}(g_{n} \triangle h(i_{n+1}p_{n+1}\triangle i_{n+1}s_{n}^{n+1}p_{n} \triangle 2^{n+1}\alpha )) . \]

\noindent It is clear that conditions (1)$_{n+1}$--(3)$_{n+1}$ are satisfied.

Obviously, $p_{n}g_{\alpha} = p_{n}$ for each $n \in \omega$. In particular, $p_{0}g_{\alpha} = p_{0}$. If $x \in X$, then there is an index $n \in \omega$ such that $2^{-(n+1)} \leq \alpha(x) \leq 2^{-n}$. Therefore, for each $i \leq n$, we conclude, by condition (2)$_{i}$, that $g_{i}(x) = p_{i}(x)$. This shows that
\[ d(x,g_{\alpha}(x)) = \max\{ d_{k}(p_{k}(x),p_{k}(g_{\alpha}(x))) \colon k \geq n+1\} \leq 2^{-(n+1)}\leq \alpha (x)  \]

\noindent and establishes the required closeness of $\operatorname{id}_{X}$ and $g_{\alpha}$.

Finally, let us show that $\operatorname{cl}(g_{\alpha}(X)) \cap Z = \emptyset$. Consider a sequence $\{ x_{i} \colon i \in \omega\}$ of points in $X$ such that the sequence $\{ g_{\alpha}(x_{i}) \colon i \in \omega\}$ converges to $x \in X$. First observe that $\operatorname{inf}\{ \alpha(x_{i}) \colon i \in \omega \} >0$. Indeed, if this is not the case, then pick a subsequence $\{ x_{i_{j}} \colon j \in \omega\}$ such that $\lim\{ \alpha(x_{i_{j}}) \colon j \in \omega\}=0$. Since, as shown above, $d(y,\alpha(y)) \leq \alpha(y)$ for each $y \in X$, it follows that $\lim \{ x_{i_{j}} \colon j \in \omega\} = x$. This implies, by continuity of $\alpha$, the contradictory equality $\alpha (x) = 0$. Consequently, $0 < \epsilon = \operatorname{inf}\{ \alpha(x_{i}) \colon i \in \omega \}$. Take $n$ large enough so that $2^{-n} \leq \epsilon$. Then $p_{n+1}(g_{\alpha}(x_{i})) = g_{n+1}(x_{i}) = s_{n}^{n+1}(p_{n}(x_{i}))$ for each $i \in \omega$ and 
\begin{multline*}
 p_{n+1}(x) = p_{n+1}(\lim\{ g_{\alpha}(x_{i}) \colon i \in \omega\}) = \lim\{ p_{n+1}(g_{\alpha}(x_{i})) \colon i \in \omega\} =\\ \lim\{ s_{n}^{n+1}(p_{n}(x_{i})) \colon i \in \omega\} \in s_{n}^{n+1}(X_{n}).
\end{multline*}

\noindent Since $s_{n}^{n+1}(X_{n}) \cap p_{n+1}(Z) = \emptyset$ it follows that $x \notin Z$ and therefore $\operatorname{cl}(g_{\alpha}(X)) \cap Z = \emptyset$.
\end{proof}

 
\section{$Z_{\tau}$-sets}\label{S:zsets}

We begin by introducing the following notation

\[ B(f,\{ {\mathcal U}_{t} \colon t \in T\} ) = \{ g \in C(X,Y) \colon g\; \text{is}\; {\mathcal U}_{t}\text{-close to}\; f \;\text{for each}\; t \in T\} , \]
\medskip

Let $\tau$ be an infinite cardinal. If $X$ and $Y$ are  Tychonov spaces then $C_{\tau}(X,Y)$ denotes the space of all continuous maps $X \to Y$ with the topology defined as follows (\cite[p.273]{chibook}):  a set $G \subseteq C_{\tau}(X,Y)$ is open if for each $h \in G$ there exists a collection $\{ {\mathcal U}_{t} \colon t \in T\} \subseteq \operatorname{cov}(Y)$, with $|T| < \tau$, such that

\[ h \in B(h,\{ {\mathcal U}_{t} \colon t \in T\} ) \subseteq G .\]

Obviously if $\tau = \omega$, then the above topology coincides with the limitation topology (see \cite{bp}).

In a wide range of situations description of basic neighborhoods in $C_{\tau}(Y,X)$ is quite simple. Proof of the following statement follows \cite[Lemma 6.5.1]{chibook} and is therefore omitted.

\begin{lem}\label{L:description}
Let $\tau > \omega$ and $X$ be a $z$-embedded subspace of a product $\prod\{ X_{t} \colon t \in T\}$ of separable metrizable spaces. If $|T| = \tau$, then basic neighborhoods of a map $f \colon Y \to X$ in $C_{\tau}(Y,X)$ are of the form $B(f, S) = \{ g \in C_{\tau}(Y,X) \colon \pi_{S}g = \pi_{S}f\}$, $S \subseteq T$, $|S| < \tau$, where $\pi_{S} \colon \prod\{ X_{t} \colon t \in T\} \to \prod\{ X_{t} \colon t \in S\}$ denotes the projection onto the corresponding subproduct.
\end{lem}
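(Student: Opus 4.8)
The plan is to show that the sets $\{B(f,S) \colon S \subseteq T,\ |S| < \tau\}$ form a neighborhood base at $f$ in $C_{\tau}(Y,X)$, which amounts to two inclusions: (ii) every standard basic neighborhood $B(f,\{{\mathcal U}_{t} \colon t \in T'\})$ (with $|T'| < \tau$ and each ${\mathcal U}_{t} \in \operatorname{cov}(X)$) contains some $B(f,S)$, and (i) every $B(f,S)$ with $|S| < \tau$ contains a standard basic neighborhood of $f$. The engine behind both inclusions is the classical fact that a countable functionally open cover of a product of separable metrizable spaces is determined by countably many coordinates; the role of $z$-embeddedness is to transport covers of $X$ up to the ambient product so this fact applies, while the role of $\tau > \omega$ is to keep every coordinate set that arises below $\tau$ in cardinality.

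For inclusion (ii), I would fix a basic neighborhood $B(f,\{{\mathcal U}_{t} \colon t \in T'\})$. Each ${\mathcal U}_{t}$ is a countable cover of $X$ by cozero sets; since $X$ is $z$-embedded in $\prod\{X_{s} \colon s \in T\}$, I would extend each member to a cozero set of the product and invoke coordinate dependence to obtain a countable $S_{t} \subseteq T$ such that every member of ${\mathcal U}_{t}$ is the trace on $X$ of a $\pi_{S_{t}}$-saturated set. Setting $S = \bigcup\{S_{t} \colon t \in T'\}$ gives $|S| \le \max(|T'|,\omega) < \tau$, using $\tau > \omega$. If $g \in B(f,S)$, then $\pi_{S_{t}}g = \pi_{S_{t}}f$ for every $t \in T'$, so for each $y \in Y$ the points $f(y)$ and $g(y)$ lie in exactly the same members of ${\mathcal U}_{t}$; hence $g$ is ${\mathcal U}_{t}$-close to $f$ for all $t$, giving $B(f,S) \subseteq B(f,\{{\mathcal U}_{t} \colon t \in T'\})$.

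For inclusion (i), I would fix $S$ with $|S| < \tau$ and exploit metrizability of the factors. For each coordinate $s \in S$ and each $k \in \omega$, the separable metric space $X_{s}$ admits a countable cozero cover by balls of radius $2^{-k}$; pulling these back along the coordinate projection and restricting to $X$ produces covers ${\mathcal U}_{s,k} \in \operatorname{cov}(X)$. The family $\{{\mathcal U}_{s,k} \colon s \in S,\ k \in \omega\}$ has cardinality $\max(|S|,\omega) < \tau$, so $B(f,\{{\mathcal U}_{s,k}\})$ is a legitimate basic neighborhood of $f$. If $g$ is ${\mathcal U}_{s,k}$-close to $f$ for all $s$ and $k$, then for each $y$ the $s$-coordinates of $f(y)$ and $g(y)$ lie within $2^{-k}$ for every $k$, hence coincide; thus $\pi_{S}g = \pi_{S}f$ and $B(f,\{{\mathcal U}_{s,k}\}) \subseteq B(f,S)$.

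The main obstacle is the coordinate-dependence input together with its interaction with $z$-embeddedness: one must check that a \emph{countable} cozero cover of $X$ genuinely factors, through a single countable coordinate set, as a saturated cover, and that the extension of cozero sets from $X$ to the product supplied by $z$-embeddedness is compatible with this factorization. Once that is secured, the remaining work is the two closeness-versus-equality verifications and the cardinal bookkeeping, all routine given $\tau > \omega$. This is precisely the line of argument in \cite[Lemma 6.5.1]{chibook}.
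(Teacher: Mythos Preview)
Your proposal is correct and aligns with the paper's approach: the paper omits the proof entirely, noting that it follows \cite[Lemma 6.5.1]{chibook}, which is exactly the source you invoke and whose argument you have faithfully sketched. The two inclusions you establish, together with the observation that $B(g,S)=B(f,S)$ whenever $g\in B(f,S)$, do give a neighborhood base at every point.
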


Now we are ready to define (fibered) $Z_{\tau}$-sets.

\begin{defin}\label{D:zset}
Let $\tau \geq \omega$. A closed subset $Z \subseteq X$ is a $Z_{\tau}$-set in $X$ if the set $\{ f \in C_{\tau}(X,X) \colon f(X) \;\text{and}\; Z \;\text{are functionally separated}\}$ is dense in the space $C_{\tau}(X,X)$. 
\end{defin}

For a map $p \colon X \to Y$ by $C_{\tau}^{p}(X,X)$ we denote set of those $f \in C_{\tau}(X,X)$ for which $pf = p$.
\begin{defin}\label{D:fzset}
Let $p \colon X \to Y$ be a map. A closed subset $Z$ of $X$ is a fibered $Z_{\tau}$-set (with respect to $p$) in $X$ if the set $\{ f \in C_{\tau}^{p}(X,X) \colon f(X) \;\text{and}\;Z\;\text{are functionally separated}\}$ is dense in the space $C_{\tau}^{p}(X,X)$.
\end{defin}

Obviously in case of compact $X$ in both definitions it would suffice to require only that $Z \cap f(X) = \emptyset$.


\section{$Z_{\tau}$-set unknotting in $R^{\tau}$}\label{S:ztauunknotting}

In this section we prove our main results (Theorem \ref{T:ztauunknottingsigmaf} and Corollary \ref{C:1}). First we need several lemmas. We say that a map $f \colon R^{A} \to R^{A}$ (or a map $f \colon R^{A} \times R^{A} \to R^{A} \times R^{A}$) factors through $C$ by $f_{C}$ if there is a map $f_{C} \colon R^{C} \to R^{C}$ (or $f_{C} \colon R^{C} \times R^{C} \to R^{C} \times R^{C}$) such that $\pi_{C}^{A}f = f_{C}\pi^{A}_{C}$ (or $(\pi_{C}^{A}\times \pi_{C}^{A})f = f_{C}(\pi_{C}^{A}\times \pi_{C}^{A}$), where $\pi^{A}_{C} \colon R^{A} \to R^{C}$ denotes the projection onto the corresponding subproduct. We also say that $f$ is $B$-invariant, where $B$ is a subset of $A$, if $\pi_{B}^{A}f = \pi_{B}^{A}$ (or $(\pi_{B}^{A} \times \pi_{B}^{A})f = \pi_{B}^{A} \times \pi_{B}^{A}$).

\begin{lem}\label{L:1}
Let $|A| > \omega$ and $f \colon R^{A}\times R^{A} \to R^{A}\times R^{A}$ be a map. Then the set ${\mathcal M}_{f}$, consisting of those $C \in \exp_{\omega}A$ for which $f$ factors through $C$ is cofinal and $\omega$-closed in $\exp_{\omega}A$. If, in addition, $f$ is $B$-invariant for some $B \subset A$, then $f$ factors through $B \cup C$ whenever $C \in {\mathcal M}_{f}$. 
\end{lem}
\begin{proof}
By Spectral Theorem, the set ${\mathcal M}_{f} \subseteq \exp_{\omega}A$, consisting of those $C \in \exp_{\omega}A$ for which $f$ factors through $C$ is cofinal and $\omega$-closed in $\exp_{\omega}A$. Straightforward calculation establishes the second part of the lemma.
\end{proof}

\begin{lem}\label{L:2}
Let $|A| > \omega$, $B  \subseteq A$ and $Z$ and $F$ be closed subsets in $R^{A}\times R^{A}$. Suppose that $f, g \colon R^{A}\times R^{A} \to R^{A}\times R^{A}$ are $B$-invariant maps such that $f(Z) = F$, $g(F) = Z$, $gf|Z = \operatorname{id}_{Z}$ and $fg|F = \operatorname{id}_{F}$. Then the set ${\mathcal M}_{f,g} = {\mathcal M}_{f}\cap {\mathcal M}_{g}$ (see Lemma \ref{L:1}) is cofinal and $\omega$-closed in $\exp_{\omega}A$. Moreover, if $Z_{B\cup C} = \operatorname{cl}_{R^{B\cup C}\times R^{B\cup C}}\left(\pi_{B\cup C}^{A}\times \pi_{B\cup C}^{A}\right)(Z)$ and $F_{B\cup C} = \operatorname{cl}_{R^{B\cup C}\times R^{B\cup C}}\left(\pi_{B\cup C}^{A}\times \pi_{B\cup C}^{A}\right)(F)$, then for each $C \in {\mathcal M}_{f,g}$ we have: 
\begin{enumerate}
  \item 
  $f$ factors through $B \cup C$ by $f_{B\cup C}$,
  \item 
   $g$ factors through $B \cup C$ by $g_{B\cup C}$,

\item  
$f_{B \cup C}(Z_{B\cup C}) = F_{B\cup C}$, 
  \item 
$g_{B \cup C}(F_{B\cup C}) = Z_{B\cup C}$,
  \item 
  $g_{B\cup C}f_{B \cup C}|Z_{B\cup C} = \operatorname{id}_{Z_{B\cup C}}$,
  \item 
  $f_{B\cup C}g_{B \cup C}|F_{B\cup C} = \operatorname{id}_{F_{B\cup C}}$.

\end{enumerate}
\end{lem}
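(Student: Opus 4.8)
The plan is to derive everything from Lemma \ref{L:1} combined with a straightforward density-and-continuity argument. For the first assertion, Lemma \ref{L:1} already gives that each of ${\mathcal M}_{f}$ and ${\mathcal M}_{g}$ is cofinal and $\omega$-closed in $\exp_{\omega}A$. Since the family of cofinal $\omega$-closed subsets of $\exp_{\omega}A$ is closed under finite intersections (the standard club-type fact underlying the Spectral Theorem), the set ${\mathcal M}_{f,g} = {\mathcal M}_{f}\cap {\mathcal M}_{g}$ is again cofinal and $\omega$-closed. Now fix $C \in {\mathcal M}_{f,g}$ and abbreviate $P = \pi_{B\cup C}^{A}\times \pi_{B\cup C}^{A}$. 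Because $f$ and $g$ are $B$-invariant and $C$ lies in both ${\mathcal M}_{f}$ and ${\mathcal M}_{g}$, the last clause of Lemma \ref{L:1} provides factorizations $Pf = f_{B\cup C}P$ and $Pg = g_{B\cup C}P$ through $B\cup C$; this is exactly (1) and (2). By definition $P(Z)$ is dense in $Z_{B\cup C}$ and $P(F)$ is dense in $F_{B\cup C}$.

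I would establish (5) and (6) before (3) and (4), since the former are needed for the reverse inclusions in the latter. For $z \in Z$ the hypothesis $gf|Z = \operatorname{id}_{Z}$ gives $P(gf(z)) = P(z)$, and applying the two factorizations rewrites this as $g_{B\cup C}f_{B\cup C}(P(z)) = P(z)$. Thus the continuous map $g_{B\cup C}f_{B\cup C}$ agrees with the identity on the dense subset $P(Z)$ of $Z_{B\cup C}$, hence on all of $Z_{B\cup C}$, which is (5). The symmetric argument starting from $fg|F = \operatorname{id}_{F}$ yields (6).

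For (3), continuity of $f_{B\cup C}$ gives the chain $f_{B\cup C}(Z_{B\cup C}) = f_{B\cup C}(\operatorname{cl}(P(Z))) \subseteq \operatorname{cl}(f_{B\cup C}(P(Z))) = \operatorname{cl}(P(f(Z))) = \operatorname{cl}(P(F)) = F_{B\cup C}$, and symmetrically $g_{B\cup C}(F_{B\cup C}) \subseteq Z_{B\cup C}$, which is the forward half of (4). For the reverse inclusion in (3), take $w \in F_{B\cup C}$; then $g_{B\cup C}(w) \in Z_{B\cup C}$ by the inclusion just proved, and (6) gives $f_{B\cup C}(g_{B\cup C}(w)) = w$, so $w \in f_{B\cup C}(Z_{B\cup C})$. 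Hence $f_{B\cup C}(Z_{B\cup C}) = F_{B\cup C}$, which is (3); the symmetric argument, using (5) in place of (6), completes (4).

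The step to watch is the reverse inclusion in (3) and (4). Since $R^{B\cup C}$ is non-compact, the continuous images $f_{B\cup C}(Z_{B\cup C})$ and $g_{B\cup C}(F_{B\cup C})$ need not be closed, so one cannot conclude surjectivity onto the full closures $F_{B\cup C}$ and $Z_{B\cup C}$ directly from the image relation $f_{B\cup C}(P(Z)) = P(F)$ together with continuity. The device that resolves this is precisely the use of the inverse maps $g_{B\cup C}$ and $f_{B\cup C}$ via the identity relations (5) and (6), which exhibit each point of $F_{B\cup C}$ (respectively $Z_{B\cup C}$) as a genuine image point. Everything else reduces to routine bookkeeping once the factorizations (1) and (2) are in hand.
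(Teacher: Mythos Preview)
Your proof is correct and follows the same approach as the paper: the paper simply invokes Lemma~\ref{L:1} and \cite[Proposition~1.1.27]{chibook} for the cofinality/$\omega$-closedness of ${\mathcal M}_{f,g}$ and declares the remaining properties a ``straightforward verification,'' which is precisely the density-and-continuity argument you spell out in detail. Your observation that (5) and (6) must precede the reverse inclusions in (3) and (4) is exactly the content of that omitted verification.
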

\begin{proof}
It suffices to note that, by Lemma \ref{L:1} and \cite[Proposition 1.1.27]{chibook}, the set ${\mathcal M}_{f,g} = {\mathcal M}_{f}\cap {\mathcal M}_{g}$  is still cofinal and $\omega$-closed in $\exp_{\omega}A$. Verification of the listed properties is straightforward.
\end{proof}

\begin{lem}\label{L:3}
Let $|B| \geq \omega$, $C = \cup\{ C_{n} \colon n \in \omega\}$, $|C_{n}| \leq \omega$, $C_{0} = \emptyset$, $Z$ and $F$ be closed sets in $R^{B\cup C}$ and $f, g \colon R^{B\cup C} \to R^{B\cup C}$ be $B$-invariant maps such that $f(Z) = F$, $g(F) = Z$, $gf|Z = \operatorname{id}_{Z}$ and $fg|F = \operatorname{id}_{F}$. Suppose also that for each $n\in \omega$ there are maps $s_{n}^{n+1}, r_{n}^{n+1} \colon R^{B\cup C_{n}} \to R^{B\cup C_{n+1}}$ such that 
\begin{itemize}
  \item[(i)] 
$\pi_{B\cup C_{n}}^{B\cup C_{n+1}}s_{n}^{n+1} = \operatorname{id}_{R^{B\cup C_{n}}}$, 
\item[(ii)]
$\pi_{B\cup C_{n}}^{B\cup C_{n+1}}r_{n}^{n+1} = \operatorname{id}_{R^{B\cup C_{n}}}$, 
\item[(iii)]
$\operatorname{Im}(s_{n}^{n+1})$ and $\pi_{B\cup C_{n+1}}^{B \cup C}(Z)$ are functionally separated in $R^{B\cup C_{n+1}}$, 
\item[(iv)] 
$\operatorname{Im}(r_{n}^{n+1})$ and $\pi_{B\cup C_{n+1}}^{B \cup C}(F)$ are functionally separated in $R^{B\cup C_{n+1}}$ 
\end{itemize}
Then there exists a homeomorphism $H \colon R^{B\cup C} \to R^{B\cup C}$ such that $\pi_{B}^{B\cup C}H = \pi_{B}^{B\cup C}$, and $H|Z= f|Z$.
\end{lem}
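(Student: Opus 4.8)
The plan is to realize $R^{B\cup C}$ as the limit of a tower with soft bonding maps, to use Proposition \ref{P:zsetsection} to recognize $Z$ and $F$ as fibered $Z$-sets over $\pi_{B}^{B\cup C}$, and then to extend the fiber-preserving homeomorphism $f|Z$ by Toru\'{n}czyk's fibered unknotting theorem. First I would set $X_{n}=R^{B\cup C_{n}}$ and let $\pi_{n}^{n+1}=\pi_{B\cup C_{n}}^{B\cup C_{n+1}}\colon X_{n+1}\to X_{n}$ be the natural projections, which exist because hypotheses (i) and (ii) presuppose $B\cup C_{n}\subseteq B\cup C_{n+1}$. Then $\mathcal{S}=\{X_{n},\pi_{n}^{n+1},\omega\}$ is an inverse sequence with $\lim\mathcal{S}=R^{B\cup C}$ and, since $C_{0}=\emptyset$, with limit projection $p_{0}=\pi_{B}^{B\cup C}\colon R^{B\cup C}\to R^{B}$; each bonding map is a projection of a power of the real line onto a subproduct and is therefore soft.

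Next I would exploit the sections. Conditions (i) and (iii) say that the $s_{n}^{n+1}$ are sections of $\pi_{n}^{n+1}$ whose images are functionally separated from $\pi_{B\cup C_{n+1}}^{B\cup C}(Z)$, in particular disjoint from it; so Proposition \ref{P:zsetsection} identifies $Z$ as a fibered $Z$-set in $R^{B\cup C}$ with respect to $p_{0}=\pi_{B}^{B\cup C}$. Symmetrically, (ii) and (iv) together with the sections $r_{n}^{n+1}$ show that $F$ is a fibered $Z$-set with respect to $p_{0}$. I would then record that $h:=f|Z\colon Z\to F$ is a homeomorphism: indeed $f(Z)=F$, $g(F)=Z$, and the identities $gf|Z=\operatorname{id}_{Z}$, $fg|F=\operatorname{id}_{F}$ exhibit $g|F$ as its inverse. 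Since $f$ is $B$-invariant we have $\pi_{B}^{B\cup C}h=\pi_{B}^{B\cup C}|Z$, so $h$ is fiber-preserving over $R^{B}$. Writing $R^{B\cup C}=R^{B}\times R^{C}$ with $R^{C}\cong R^{\omega}$, this is exactly the situation of Toru\'{n}czyk's theorem: two fibered $Z$-sets and a fiber-preserving homeomorphism between them. Applying Theorem \ref{T:fz-setunknottingR} yields a homeomorphism $H\colon R^{B\cup C}\to R^{B\cup C}$ with $\pi_{B}^{B\cup C}H=\pi_{B}^{B\cup C}$ and $H|Z=h=f|Z$.

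The step that needs the most care is this last one, and it is where the hypothesis $|B|\geq\omega$ (as opposed to $|B|=\omega$) becomes delicate. Both Proposition \ref{P:zsetsection} and Theorem \ref{T:fz-setunknottingR} are stated in the separable setting — the former requires the $X_{n}$ to be Polish, the latter has base $R^{\omega}$ — and these hypotheses hold verbatim precisely when $B$ is countable, in which case $R^{B\cup C}=R^{\omega}\times R^{\omega}$ and the two displayed applications finish the proof in a couple of lines. When $|B|>\omega$ the total space $R^{B\cup C}$ is non-metrizable and neither statement applies directly. The way I would get around this is to observe that \emph{every} datum in the problem — the maps $f$ and $g$, all sections $s_{n}^{n+1}$ and $r_{n}^{n+1}$, and the whole tower $\mathcal{S}$ — is fiber-preserving over $R^{B}$, while the relevant separability is confined to the fibers $R^{C}$. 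Consequently the arguments above can be carried out over the tower in a base-preserving fashion: $H$ is built as the limit of fiber-preserving homeomorphisms of the levels $X_{n}$, constructed by softness of the $\pi_{n}^{n+1}$ exactly as the maps $g_{\alpha}$ were built in the proof of Proposition \ref{P:zsetsection}, now using the sections $s_{n}^{n+1}$ to push off $Z$ and the sections $r_{n}^{n+1}$ to push off $F$ in alternation, while interpolating with $f$ and $g$ so that the values on $Z$ are preserved.

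I expect the genuine work to lie in verifying that this fiberwise/tower construction does two things simultaneously: that the limit $H$ is a homeomorphism (and not merely a near-homeomorphism), which is where the alternation between the $Z$-side sections $s_{n}^{n+1}$ and the $F$-side sections $r_{n}^{n+1}$ — an infinite-swindle style back-and-forth — is essential, and that throughout the construction the level maps commute with $\pi_{B}^{B\cup C}$ and leave $f|Z$ untouched. The identities $gf|Z=\operatorname{id}_{Z}$ and $fg|F=\operatorname{id}_{F}$ are exactly what make the two families of sections compatible at each stage, so the main check is the convergence and bijectivity of the limit homeomorphism, with the $B$-invariance of all ingredients guaranteeing $\pi_{B}^{B\cup C}H=\pi_{B}^{B\cup C}$ for free.
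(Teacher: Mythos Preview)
Your treatment of the countable case $|B|=\omega$ is exactly what the paper does there: set up the tower $X_{n}=R^{B\cup C_{n}}$, invoke Proposition~\ref{P:zsetsection} with the sections $s_{n}^{n+1}$ and $r_{n}^{n+1}$ to see that $Z$ and $F$ are fibered $Z$-sets over $R^{B}$, and finish with Theorem~\ref{T:fz-setunknottingR}. You also correctly locate the difficulty for $|B|>\omega$: neither Proposition~\ref{P:zsetsection} nor Theorem~\ref{T:fz-setunknottingR} applies, since the spaces are no longer Polish.

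The gap is in your proposed workaround. You suggest rerunning the proof of Proposition~\ref{P:zsetsection} and, in effect, reproving Toru\'{n}czyk's unknotting theorem ``over the tower in a base-preserving fashion,'' via an alternating swindle with the $s_{n}^{n+1}$ and $r_{n}^{n+1}$. But that swindle is precisely the hard analytic content of Theorem~\ref{T:fz-setunknottingR}, and it relies on metric estimates and completeness (convergence of the back-and-forth compositions to a homeomorphism) that have no obvious substitute over a non-metrizable base $R^{B}$. Your last paragraph acknowledges this is ``the genuine work,'' but nothing in the proposal indicates how the convergence would be controlled; as written it is a plan to reprove an unpublished theorem in a strictly harder setting.

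The paper avoids this entirely by a Spectral Theorem reduction. Since $f$, $g$, and each $s_{n}^{n+1}$, $r_{n}^{n+1}$ are continuous maps between products of separable metric factors, the Spectral Theorem for factorizing $\omega$-spectra produces a single \emph{countable} $B_{0}\subseteq B$ through which all of them factor simultaneously, with the factored sections $\widetilde{s}_{n}^{n+1}$, $\widetilde{r}_{n}^{n+1}$ still separating from the projected $Z$ and $F$. Now $R^{B_{0}\cup(C\setminus B)}$ is Polish, so Proposition~\ref{P:zsetsection} and Theorem~\ref{T:fz-setunknottingR} apply verbatim to give $\widetilde{H}$ on $R^{B_{0}\cup(C\setminus B)}$ fibered over $R^{B_{0}}$. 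Finally, because
\[
\xymatrix{
R^{B\cup C}\ar[r]\ar[d] & R^{B}\ar[d]\\
R^{B_{0}\cup(C\setminus B)}\ar[r] & R^{B_{0}}
}
\]
is a pullback square, $\widetilde{H}$ lifts uniquely to the desired $H$ on $R^{B\cup C}$ with $\pi_{B}^{B\cup C}H=\pi_{B}^{B\cup C}$. The missing idea in your argument is this factorization through a countable $B_{0}$ together with the pullback lift; once you have it, no new swindle is needed.
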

\begin{proof}
Let $M \subseteq N$, $|N \setminus M| \leq \omega$ and ${\mathcal M}$ be a cofinal and $\omega$-closed subset of $\exp_{\omega}M$. Then the correspondence $P \mapsto P\cup (N \setminus M)$, $P \in {\mathcal M}$, identifies ${\mathcal M}$ with a cofinal and $\omega$-closed subset of $\exp_{\omega}N$. Using just described correspondence, Spectral Theorem for factorizing $\omega$-spectra \cite[Theorem 1.3.6]{chibook}, applied to the maps $f$, $g$, $s_{n}^{n+1}$, $r_{n}^{n+1}$, and \cite[Proposition 1.1.27]{chibook}, we can find a countable subset $B_{0} \subseteq B$ and maps $\widetilde{f}, \widetilde{g} \colon R^{B_{0}\cup (C\setminus B)} \to R^{B_{0}\cup (C\setminus B)}$, $\widetilde{s}_{n}^{n+1}, \widetilde{r}_{n}^{n+1} \colon R^{B_{0}\cup (C_{n}\setminus B)}\to R^{B_{0}\cup (C_{n+1}\setminus B)}$ such that

\begin{itemize}
  \item[(1)]
$\pi_{B_{0}}^{B_{0}\cup (C\setminus B)}\widetilde{f}= \pi_{B_{0}}^{B_{0}\cup (C \setminus B)}$;
  \item[(2)] 
$\pi_{B_{0}}^{B_{0}\cup (C\setminus B)}\widetilde{g}= \pi_{B_{0}}^{B_{0}\cup (C\setminus B)}$;
\item[(3)]
$\pi_{B_{0}\cup (C\setminus B)}^{B\cup C}f = \widetilde{f}\pi_{B_{0}\cup (C\setminus B)}^{B\cup C}$;
  \item[(4)]
$\pi_{B_{0}\cup (C\setminus B)}^{B\cup C}g = \widetilde{g}\pi_{B_{0}\cup (C\setminus B)}^{B\cup C}$;
  \item[(5)] 
$\widetilde{f}(\operatorname{cl}(\pi_{B_{0}\cup (C\setminus B)}^{B\cup C}(Z))) = \operatorname{cl}(\pi_{B_{0}\cup (C\setminus B)}^{B\cup C}(F))$;
  \item[(6)] 
$\widetilde{g}(\operatorname{cl}(\pi_{B_{0}\cup (C\setminus B)}^{B\cup C}(F))) = \operatorname{cl}(\pi_{B_{0}\cup (C\setminus B)}^{B\cup C}(Z))$;
  \item[(7)] 
 $\pi_{B_{0}\cup (C_{n+1}\setminus B)}^{B\cup C_{n+1}}s_{n}^{n+1} = \widetilde{s}_{n}^{n+1}\pi_{B_{0}\cup (C_{n}\setminus B)}^{B\cup C_{n}}$
  \item[(8)] 
   $\pi_{B_{0}\cup (C_{n+1}\setminus B)}^{B\cup C_{n+1}}r_{n}^{n+1} = \widetilde{r}_{n}^{n+1}\pi_{B_{0}\cup (C_{n}\setminus B)}^{B\cup C_{n}}$
  \item[(9)] 
 $\pi_{B_{0}\cup (C_{n}\setminus B)}^{B_{0}\cup (C_{n+1}\setminus B)}\widetilde{s}_{n}^{n+1} = \operatorname{id}_{R^{B_{0}\cup (C_{n}\setminus B)}}$
  \item[(10)] 
 $\pi_{B_{0}\cup (C_{n}\setminus B)}^{B_{0}\cup (C_{n+1}\setminus B)}\widetilde{r}_{n}^{n+1} = \operatorname{id}_{R^{B_{0}\cup (C_{n}\setminus B)}}$
 \end{itemize}

By increasing $B_{0}$ if necessary and using \cite[Corollary 1.1.28]{chibook}, we may assume in addition that
\begin{itemize}
  \item[(11)] 
$\operatorname{cl}_{R^{B_{0}\cup (C_{n+1}\setminus B)}}(\operatorname{Im}(\widetilde{s}_{n}^{n+1})) \cap \operatorname{cl}_{R^{B_{0}\cup (C_{n+1}\setminus B)}}(\pi^{B \cup C_{n+1}}_{B_{0}\cup (C_{n+1}\setminus B)}(Z)) = \emptyset$;
  \item[(12)] 
$\operatorname{cl}_{R^{B_{0}\cup (C_{n+1}\setminus B)}}(\operatorname{Im}(\widetilde{r}_{n}^{n+1})) \cap \operatorname{cl}_{R^{B_{0}\cup (C_{n+1}\setminus B)}}(\pi^{B \cup C_{n+1}}_{B_{0}\cup (C_{n+1}\setminus B)}(F)) = \emptyset$;
\end{itemize}

By Proposition \ref{P:zsetsection}, $\operatorname{cl}(\pi_{B_{0}\cup (C\setminus B)}^{B\cup C}(Z))$ and $\operatorname{cl}(\pi_{B_{0}\cup (C\setminus B)}^{B\cup C}(F))$ are fibered $Z$-set in $R^{B_{0}\cup (C\setminus B)}$ with respect to the projection $\pi_{B_{0}}^{B_{0}\cup (C\setminus B)} \colon R^{B_{0}\cup (C\setminus B)} \to R^{B_{0}}$. By Theorem \ref{T:fz-setunknottingR}, the homeomorphism 
\[ \widetilde{f}|\operatorname{cl}_{R^{B_{0}\cup (C\setminus B)}}(\pi_{B_{0}\cup (C\setminus B)}^{B\cup C}(Z)) \colon \operatorname{cl}_{R^{B_{0}\cup (C\setminus B)}}(\pi_{B_{0}\cup (C\setminus B)}^{B\cup C}(Z)) \to \operatorname{cl}_{R^{B_{0}\cup (C\setminus B)}}(\pi_{B_{0}\cup (C\setminus B)}^{B\cup C}(F))\]

\noindent can be extended to an autohomeomorphism $\widetilde{H} \colon R^{B_{0}\cup (C\setminus B)} \to R^{B_{0}\cup (C\setminus B)}$ such that $\pi_{B_{0}}^{B_{0}\cup (C\setminus B)}\widetilde{H} = \pi_{B_{0}}^{B_{0}\cup (C\setminus B)}$. Since the following diagram is a pullback square (note that $(B \cup C)\setminus B = (B_{0}\cup (C\setminus B))\setminus B_{0}$)

\[
        \xymatrix{
            R^{B \cup C} \ar^{\pi_{B}^{B\cup C}}[rr] \ar_{\pi_{B_{0}\cup (C\setminus B)}^{B\cup C}}[d] & & R^{B} \ar^{\pi_{B_{0}}^{B}}[d]\\
            R^{B_{0}\cup (C\setminus B)} \ar^{\pi_{B_{0}}^{B_{0}\cup (C\setminus B)}}[rr]   & & R^{B_{0}}   \\
        }
      \]

\noindent it follows that there is a unique homeomorphism $H \colon R^{B \cup C} \to R^{B\cup C}$ such that $\pi_{B_{0}\cup (C\setminus B)}^{B\cup C}H = \widetilde{H}\pi_{B_{0}\cup (C\setminus B)}^{B\cup C}$ and $\pi_{B}^{B\cup C}H= \pi_{B}^{B\cup C}$. Straightforward verification shows that $H|Z = f|Z$.
\end{proof}

\begin{lem}\label{L:limitfiberwise}
Let $\{ C_{n} \colon n \in \omega\}$ be an increasing sequence of countable subsets, $C_{0} = \emptyset$ and $C = \cup \{ C_{n} \colon n \in \omega\}$.  Suppose also that $B$ is an arbitrary set, $Z$ and $F$ are closed subsets of $R^{B\cup C}\times R^{B\cup C}$ and we are given maps $\varphi_{n}, \psi_{n} \colon R^{B\cup C_{n}}\times R^{B\cup C_{n}} \to R^{B\cup C_{n}}\times R^{B\cup C_{n}}$, $ n \geq 1$, such that
\begin{itemize}
  \item[(1)]
$\pi_{1}^{B\cup C_{n}}\varphi_{n} =  \pi_{1}^{B\cup C_{n}}$, $n \geq 1$, where $\pi_{1}^{B\cup C_{n}} \colon R^{B \cup C_{n}}\times R^{B\cup C_{n}} \to R^{B\cup C_{n}}$ denotes the projection onto the first factor;
  \item[(2)] 
$\pi_{1}^{B\cup C_{n}}\psi_{n} =  \pi_{1}^{B\cup C_{n}}$, $n \geq 1$, where $\pi_{1}^{B\cup C_{n}} \colon R^{B \cup C_{n}}\times R^{B\cup C_{n}} \to R^{B\cup C_{n}}$ denotes the projection onto the first factor;
  \item[(3)] 
$\varphi_{n+1}$ is $(B\cup C_{n})$-invariant, $n \in \omega$;
  \item[(4)]
$\psi_{n+1}$ is $(B\cup C_{n})$-invariant, $n \in \omega$;
  \item[(5)] 
$\left(\pi_{B\cup C_{n}}^{B\cup C}\times \pi_{B\cup C_{n}}^{B\cup C}\right)(Z)$ and $\operatorname{Im}(\varphi_{n})$ are functionally separated in $R^{B \cup C_{n}}\times R^{B\cup C_{n}}$, $n \geq 1$;
  \item[(6)] 
$\left(\pi_{B\cup C_{n}}^{B\cup C}\times \pi_{B\cup C_{n}}^{B\cup C}\right)(F)$ and $\operatorname{Im}(\psi_{n})$ are functionally separated in $R^{B \cup C_{n}}\times R^{B\cup C_{n}}$, $n \geq 1$;
\end{itemize}  
If $f, g \colon R^{B \cup C}\times R^{B\cup C} \to R^{B\cup C}\times R^{B\cup C}$ are maps such that 
\begin{itemize}
  \item[(7)] 
 $f(Z) = F$, $g(F) = Z$;
  \item[(8)] 
$gf|Z = \operatorname{id}_{Z}$ and $fg|F = \operatorname{id}_{F}$;
  \item[(9)] 
$\pi_{1}^{B\cup C}f = \pi_{1}^{B\cup C}$, $\pi_{1}^{B\cup C}g = \pi_{1}^{B\cup C}$
  \item[(10)] 
 $\pi_{B\cup C_{0}}^{B\cup C}f = \pi_{B\cup C_{0}}^{B\cup C}|Z$,  $\pi_{B\cup C_{0}}^{B\cup C}g = \pi_{B\cup C_{0}}^{B\cup C}|F$
\end{itemize}
\noindent then the homeomorphism $h = f|Z \colon Z \to F$ can be extended to an autohomeomorphism $H \colon R^{B\cup C}\times R^{B\cup C} \to R^{B\cup C}\times R^{B\cup C}$ such that $\pi_{1}^{B\cup C}H = \pi_{1}^{B\cup C}$ and $\left(\pi_{B}^{B\cup C} \times \pi_{B}^{B\cup C}\right) H = \pi_{B}^{B\cup C} \times \pi_{B}^{B\cup C}$. 
\end{lem}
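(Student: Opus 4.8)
The plan is to mirror the proof of Lemma \ref{L:3} in the fibered setting, the one new feature being that the autohomeomorphism $H$ must respect two projections at once: the first-factor projection $\pi_{1}^{B\cup C}$ and the base projection $\pi_{B}^{B\cup C}\times\pi_{B}^{B\cup C}$. The device that makes this possible is to reindex the two copies of $R^{B\cup C}$ so that a single application of Toru\'{n}czyk's Theorem \ref{T:fz-setunknottingR} yields invariance under both. Writing the index set of $R^{B\cup C}\times R^{B\cup C}$ as $\{1,2\}\times(B\cup C)$, I would put $\mathcal D=(\{1\}\times(B\cup C))\cup(\{2\}\times B)$ (all of the first factor together with the $B$-part of the second) and $\mathcal E=\{2\}\times(C\setminus B)$, $\mathcal E_{n}=\{2\}\times(C_{n}\setminus B)$. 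An autohomeomorphism of $R^{B\cup C}\times R^{B\cup C}$ preserving the projection $p_{\mathcal D}$ onto $R^{\mathcal D}$ is automatically $\pi_{1}^{B\cup C}$-preserving and $B$-invariant, which is exactly the conclusion sought; condition (10) is precisely what makes $h=f|Z$ (and $g|F$) compatible with $p_{\mathcal D}$.

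First I would run the spectral reduction exactly as in Lemma \ref{L:3}: applying the Spectral Theorem for factorizing $\omega$-spectra \cite[Theorem 1.3.6]{chibook} to the family $f,g,\varphi_{n},\psi_{n}$ (together with \cite[Proposition 1.1.27]{chibook}) to produce a countable $B_{0}\subseteq B$ and factorizations $\widetilde f,\widetilde g$ on $R^{B_{0}\cup(C\setminus B)}\times R^{B_{0}\cup(C\setminus B)}$ and $\widetilde\varphi_{n},\widetilde\psi_{n}$ on $R^{B_{0}\cup(C_{n}\setminus B)}\times R^{B_{0}\cup(C_{n}\setminus B)}$, which inherit the first-factor invariance (1),(2),(9), the step invariance (3),(4), and carry $\operatorname{cl}(\pi(Z))$ onto $\operatorname{cl}(\pi(F))$ and back with $\widetilde g\widetilde f=\operatorname{id}$ and $\widetilde f\widetilde g=\operatorname{id}$ on these closures. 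Condition (10) descends to the statement that $\widetilde f,\widetilde g$ fix the second-factor $B_{0}$-coordinates of the points of $\operatorname{cl}(\pi(Z)),\operatorname{cl}(\pi(F))$, which together with first-factor invariance says precisely that $\widetilde f|\operatorname{cl}(\pi(Z))$ commutes with $p_{\mathcal D}$. Enlarging $B_{0}$ by means of \cite[Corollary 1.1.28]{chibook}, I would upgrade the functional separations (5),(6) to genuine disjointness of closures, so that $\operatorname{Im}(\widetilde\varphi_{n})$ and $\operatorname{Im}(\widetilde\psi_{n})$ are disjoint from the relevant closures of the projections of $Z$ and $F$.

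Next I would verify the fibered $Z$-set hypothesis via Proposition \ref{P:zsetsection}, applied to the inverse sequence $X_{n}=R^{\mathcal D\cup\mathcal E_{n}}$ (Polish, with soft coordinate projections), whose limit is $R^{\mathcal D}\times R^{\mathcal E}$ and whose zeroth term is $R^{\mathcal D}$ (since $\mathcal E_{0}=\emptyset$). The required sections $\sigma_{n}^{n+1}\colon X_{n}\to X_{n+1}$ are manufactured from $\widetilde\varphi_{n+1}$: a point of $X_{n}$ consists of a full first-factor coordinate and a truncated second-factor coordinate, and one defines the new $\mathcal E_{n+1}\setminus\mathcal E_{n}$ coordinates by feeding $\widetilde\varphi_{n+1}$ the projection of the first-factor coordinate down to $R^{B_{0}\cup(C_{n+1}\setminus B)}$ together with any extension of the second-factor coordinate. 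The step invariance of $\widetilde\varphi_{n+1}$ leaves the old coordinates untouched, so $\sigma_{n}^{n+1}$ is a genuine section; and because $\operatorname{Im}(\sigma_{n}^{n+1})$ projects into $\operatorname{Im}(\widetilde\varphi_{n+1})$, which is disjoint from the corresponding closure of $\pi(Z)$, it misses $p_{n+1}(\operatorname{cl}(\pi(Z)))$. Using $\widetilde\psi_{n}$ symmetrically for $F$, Proposition \ref{P:zsetsection} then shows that $\operatorname{cl}(\pi(Z))$ and $\operatorname{cl}(\pi(F))$ are fibered $Z$-sets in $R^{\mathcal D}\times R^{\mathcal E}$ with respect to $p_{\mathcal D}$.

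Finally, identifying $R^{\mathcal D}\times R^{\mathcal E}$ with $R^{\omega}\times R^{\omega}$ so that $p_{\mathcal D}$ becomes the first-factor projection, Theorem \ref{T:fz-setunknottingR} extends the $p_{\mathcal D}$-preserving homeomorphism $\widetilde f|\operatorname{cl}(\pi(Z))$ to an autohomeomorphism $\widetilde H$ of $R^{\mathcal D}\times R^{\mathcal E}$ with $p_{\mathcal D}\widetilde H=p_{\mathcal D}$. The pullback square of Lemma \ref{L:3}, applied to both factors simultaneously (reducing $B$ to $B_{0}$), then lifts $\widetilde H$ to $H\colon R^{B\cup C}\times R^{B\cup C}\to R^{B\cup C}\times R^{B\cup C}$; since $\widetilde H$ preserves both the whole first factor and the $B_{0}$-coordinates, $H$ preserves $\pi_{1}^{B\cup C}$ and is $B$-invariant, and the factorization identities give $H|Z=f|Z=h$. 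The main obstacle I anticipate is precisely the reindexing/section step: one must recognize that a single fibration $p_{\mathcal D}$ can encode both invariance requirements, and then build the Proposition \ref{P:zsetsection} sections from the self-maps $\varphi_{n}$ despite the asymmetry between the full first factor appearing in $X_{n}$ and the truncated first factor on which $\widetilde\varphi_{n}$ is defined. Everything else — softness of the projections, the passage from functional separation to disjoint closures, and the verification $H|Z=h$ — is routine given Lemma \ref{L:3} and the cited spectral machinery.
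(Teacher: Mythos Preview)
Your proposal is correct and follows essentially the same approach as the paper: both identify the single ``base'' $\mathcal D$ (the full first factor together with the $B$-coordinates of the second factor) so that $p_{\mathcal D}$-invariance encodes both $\pi_1^{B\cup C}$-invariance and $B$-invariance, and then reduce to Toru\'{n}czyk's theorem via the tower $R^{B\cup C}\times R^{B\cup C_n}$. The only difference is packaging: the paper first lifts each $\varphi_n$ to a self-map $\varphi_n' = \varphi_n(\pi_{B\cup C_n}^{B\cup C}\times\operatorname{id})\triangle\pi_1$ of $R^{B\cup C}\times R^{B\cup C_n}$, verifies the analogues of (i)--(iv), and then invokes Lemma~\ref{L:3} as a black box (with the role of ``$B$'' there played by the index set of $R^{B\cup C}\times R^{B\cup C_0}$), whereas you unpack Lemma~\ref{L:3} and redo the spectral reduction, Proposition~\ref{P:zsetsection}, and Theorem~\ref{T:fz-setunknottingR} by hand.
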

\begin{proof}
For each $n \geq 1$ let

\[ \varphi_{n}^{\prime} = \varphi_{n}\left(\pi_{B\cup C_{n}}^{B\cup C} \times \operatorname{id}_{R^{B\cup C_{n}}}\right) \triangle \pi_{1} \colon R^{B\cup C} \times R^{B \cup C_{n}} \to R^{B\cup C} \times R^{B \cup C_{n}}\]

\noindent and

\[ \psi_{n}^{\prime} = \psi_{n}\left(\pi_{B\cup C_{n}}^{B\cup C} \times \operatorname{id}_{R^{B\cup C_{n}}}\right) \triangle \pi_{1} \colon R^{B\cup C} \times R^{B \cup C_{n}} \to R^{B\cup C} \times R^{B \cup C_{n}}\]

Here is the corresponding diagram:

\[
        \xymatrix{
          R^{B \cup C}\times R^{B\cup C_{n+1}} \ar@(ul,ur)[]|{\varphi_{n+1}^{\prime}} \ar^{\pi_{1}}[rrrr] \ar_{\pi_{B\cup C_{n+1}}^{B\cup C}\times \operatorname{id}}[dddd] \ar^(.6){\operatorname{id}\times \pi_{B\cup C_{n}}^{B\cup C_{n+1}}}[dr] & & & & R^{B\cup C} \ar^{\pi_{B \cup C_{n+1}}^{B\cup C}}[dddd]\\
  & R^{B\cup C}\times R^{B\cup C_{n}} \ar@(ul,ur)[]|{\varphi_{n}^{\prime}} \ar^{\pi_{1}}[rrru] \ar^(.3){\pi_{B\cup C_{n}}^{B\cup C}\times \operatorname{id}}[ddddddl] & & & \\
 & & & & \\
  & & & & \\
 R^{B\cup C_{n+1}}\times R^{B\cup C_{n+1}} \ar@(ul,ur)[]|{\varphi_{n+1}} \ar_{\pi_{B\cup C_{n}}^{B\cup C_{n+1}}\times \pi_{B\cup C_{n}}^{B\cup C_{n+1}}}[ddd] \ar^{\pi_{1}}[rrrr] & & & & R^{B\cup C_{n+1}} \ar^{\pi_{B\cup C_{n}}^{B\cup C_{n+1}}}[ddd]\\
  & & & & \\
 & & & & \\
             R^{B\cup C_{n}} \times R^{B\cup C_{n}} \ar@(ul,ur)[]|{\varphi_{n}} \ar^{\pi_{1}}[rrrr]   & & & & R^{B \cup C_{n}}   \\
        }
      \]

 It follows from properties (1)--(4) that

\begin{itemize}
\item[(11)]
$\left(\operatorname{id}_{R^{B\cup C}}\times \pi_{B\cup C_{n}}^{B\cup C_{n+1}}\right)\varphi_{n+1}^{\prime} = \operatorname{id}_{R^{B\cup C}}\times \pi_{B\cup C_{n}}^{B\cup C_{n+1}}$
  \item[(12)] 
$\left(\operatorname{id}_{R^{B\cup C}}\times \pi_{B\cup C_{n}}^{B\cup C_{n+1}}\right)\psi_{n+1}^{\prime} = \operatorname{id}_{R^{B\cup C}}\times \pi_{B\cup C_{n}}^{B\cup C_{n+1}}$
\end{itemize}

Properties (5) and (6) guarantee that 

\begin{itemize}
  \item[(13)] 
$\left(\operatorname{id}_{R^{B\cup C}}\times \pi_{B\cup C_{n}}^{B\cup C}\right)(Z)$ and $\operatorname{Im}(\varphi_{n}^{\prime})$ are functionally separated in $R^{B \cup C}\times R^{B\cup C_{n}}$, $n \geq 1$;
  \item[(14)] 
$\left(\operatorname{id}_{R^{B\cup C}}\times \pi_{B\cup C_{n}}^{B\cup C}\right)(F)$ and $\operatorname{Im}(\psi_{n}^{\prime})$ are functionally separated in $R^{B \cup C}\times R^{B\cup C_{n}}$, $n \geq 1$;
\end{itemize}

Properties (9) and (10) imply that
\begin{itemize}
  \item[(15)]
$\left(\operatorname{id}_{R^{B\cup C}} \times \pi_{B\cup C_{0}}^{B\cup C}\right) f = \operatorname{id}_{R^{B\cup C}} \times \pi_{B\cup C_{0}}^{B\cup C}$,  
\item[(16)]
$\left(\operatorname{id}_{R^{B\cup C}} \times \pi_{B\cup C_{0}}^{B\cup C}\right) g = \operatorname{id}_{R^{B\cup C}} \times \pi_{B\cup C_{0}}^{B\cup C}$.
\end{itemize}

By Lemma \ref{L:3}, applied to the collection $\{ (B\cup C_{n}) \times (B\cup C_{n}) \colon n \in \omega\}$, there exists an autohomeomorphism $H$ of $R^{B\cup C} \times R^{B\cup C}$ such that $H|Z = f|Z$ and $\left(\operatorname{id}_{R^{B\cup C}} \times \pi_{B\cup C_{0}}^{B\cup C}\right) H = \operatorname{id}_{R^{B\cup C}} \times \pi_{B\cup C_{0}}^{B\cup C}$. The latter implies that $\pi_{1}^{B\cup C}H = \pi_{1}^{B\cup C}$ and $\left(\pi_{B}^{B\cup C} \times \pi_{B}^{B\cup C}\right) H = \pi_{B}^{B\cup C} \times \pi_{B}^{B\cup C}$ as required.
\end{proof}

\begin{lem}\label{L:limitfiberwise1}
Let $\{ C_{n} \colon n \in \omega\}$ be an increasing sequence of countable subsets, $C_{0} = \emptyset$ and $C = \cup \{ C_{n} \colon n \in \omega\}$.  Suppose also that $B$ is an arbitrary set, $Z$ and $F$ are closed subsets of $R^{B\cup C}\times R^{B\cup C}$ and we are given maps $\varphi_{n}, \psi_{n} \colon R^{B\cup C_{n}}\times R^{B\cup C_{n}} \to R^{B\cup C_{n}}\times R^{B\cup C_{n}}$, $ n \geq 1$, such that
\begin{itemize}
  \item[(1)]
$\pi_{1}^{B\cup C_{n}}\varphi_{n} =  \pi_{1}^{B\cup C_{n}}$, $n \geq 1$, where $\pi_{1}^{B\cup C_{n}} \colon R^{B \cup C_{n}}\times R^{B\cup C_{n}} \to R^{B\cup C_{n}}$ denotes the projection onto the first factor;
  \item[(2)] 
$\pi_{1}^{B\cup C_{n}}\psi_{n} =  \pi_{1}^{B\cup C_{n}}$, $n \geq 1$, where $\pi_{1}^{B\cup C_{n}} \colon R^{B \cup C_{n}}\times R^{B\cup C_{n}} \to R^{B\cup C_{n}}$ denotes the projection onto the first factor;
  \item[(3)] 
$\left(\pi_{B\cup C_{n}}^{B\cup C_{n+1}} \times \pi_{B\cup C_{n}}^{B\cup C_{n+1}}\right) \varphi_{n+1} = \pi_{B\cup C_{n}}^{B\cup C_{n+1}} \times \pi_{B\cup C_{n}}^{B\cup C_{n+1}}$, $n \in \omega$;
  \item[(4)]
$\left(\pi_{B\cup C_{n}}^{B\cup C_{n+1}} \times \pi_{B\cup C_{n}}^{B\cup C_{n+1}}\right) \psi_{n+1} = \pi_{B\cup C_{n}}^{B\cup C_{n+1}} \times \pi_{B\cup C_{n}}^{B\cup C_{n+1}}$, $n \in \omega$;
  \item[(5)] 
$\left(\pi_{B\cup C_{n}}^{B\cup C}\times \pi_{B\cup C_{n}}^{B\cup C}\right)(Z)$ and $\operatorname{Im}(\varphi_{n})$ are functionally separated in $R^{B \cup C_{n}}\times R^{B\cup C_{n}}$, $n \geq 1$;
  \item[(6)] 
$\left(\pi_{B\cup C_{n}}^{B\cup C}\times \pi_{B\cup C_{n}}^{B\cup C}\right)(F)$ and $\operatorname{Im}(\psi_{n})$ are functionally separated in $R^{B \cup C_{n}}\times R^{B\cup C_{n}}$, $n \geq 1$;
\end{itemize}  
Let also $H_{0} \colon R^{B\cup C_{0}}\times R^{B\cup C_{0}} \to R^{B\cup C_{0}}\times R^{B\cup C_{0}}$ be a homeomorphism such that $\pi_{1}^{B\cup C_{0}}H = \pi_{1}^{B\cup C_{0}}$ and $f, g \colon R^{B \cup C}\times R^{B\cup C} \to R^{B\cup C}\times R^{B\cup C}$ be maps such that 
\begin{itemize}
  \item[(7)] 
 $f(Z) = F$, $g(F) = Z$;
  \item[(8)] 
$gf|Z = \operatorname{id}_{Z}$ and $fg|F = \operatorname{id}_{F}$;
  \item[(9)] 
$\pi_{1}^{B\cup C}f = \pi_{1}^{B\cup C}$, $\pi_{1}^{B\cup C}g = \pi_{1}^{B\cup C}$
  \item[(10)] 
 $\left(\pi_{B\cup C_{0}}^{B\cup C}\times \pi_{B\cup C_{0}}^{B\cup C}\right)f = H_{0}\left(\pi_{B\cup C_{0}}^{B\cup C}\times \pi_{B\cup C_{0}}^{B\cup C}\right)$,  $\left(\pi_{B\cup C_{0}}^{B\cup C}\times \pi_{B\cup C_{0}}^{B\cup C}\right)g = H_{0}^{-1}\left(\pi_{B\cup C_{0}}^{B\cup C}\times \pi_{B\cup C_{0}}^{B\cup C}\right)$
\end{itemize}
\noindent then the homeomorphism $h = f|Z \colon Z \to F$ can be extended to an autohomeomorphism $H \colon R^{B\cup C}\times R^{B\cup C} \to R^{B\cup C}\times R^{B\cup C}$ such that $\pi_{1}^{B\cup C}H = \pi_{1}^{B\cup C}$ and $\left(\pi_{B}^{B\cup C} \times \pi_{B}^{B\cup C}\right) H = H_{0}\left(\pi_{B}^{B\cup C} \times \pi_{B}^{B\cup C}\right)$. 
\end{lem}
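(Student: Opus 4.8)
The plan is to reduce the statement to Lemma \ref{L:limitfiberwise} (which is the special case $H_{0}=\operatorname{id}$) by conjugating the whole situation with a global lift of $H_{0}$. Since $C_{0}=\emptyset$, the given $H_{0}$ is a homeomorphism of $R^{B}\times R^{B}$ with $\pi_{1}^{B}H_{0}=\pi_{1}^{B}$, so it fixes the $B$-coordinates of the first factor and the complementary coordinates and moves only the $B$-coordinates of the second factor. I would therefore let $\widehat{H}_{0}\colon R^{B\cup C}\times R^{B\cup C}\to R^{B\cup C}\times R^{B\cup C}$ be the homeomorphism acting as $H_{0}$ on the product $R^{B}\times R^{B}$ of $B$-coordinates of the two factors and as the identity on the remaining coordinates, and similarly let $\widehat{H}_{0,n}$ be its analogue on $R^{B\cup C_{n}}\times R^{B\cup C_{n}}$. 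Because $\widehat{H}_{0}$ touches only $B$-coordinates and depends only on $B$-coordinates, it preserves the first factor, $\pi_{1}^{B\cup C}\widehat{H}_{0}=\pi_{1}^{B\cup C}$, it satisfies $(\pi_{B}^{B\cup C}\times\pi_{B}^{B\cup C})\widehat{H}_{0}=H_{0}(\pi_{B}^{B\cup C}\times\pi_{B}^{B\cup C})$, and it commutes with all filtration projections: $(\pi_{B\cup C_{n}}^{B\cup C}\times\pi_{B\cup C_{n}}^{B\cup C})\widehat{H}_{0}=\widehat{H}_{0,n}(\pi_{B\cup C_{n}}^{B\cup C}\times\pi_{B\cup C_{n}}^{B\cup C})$ and $(\pi_{B\cup C_{n}}^{B\cup C_{n+1}}\times\pi_{B\cup C_{n}}^{B\cup C_{n+1}})\widehat{H}_{0,n+1}=\widehat{H}_{0,n}(\pi_{B\cup C_{n}}^{B\cup C_{n+1}}\times\pi_{B\cup C_{n}}^{B\cup C_{n+1}})$.

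Next I would introduce the conjugated data $f'=\widehat{H}_{0}^{-1}f$, $g'=g\widehat{H}_{0}$, $F'=\widehat{H}_{0}^{-1}(F)$, leave $\varphi_{n}$ and $Z$ unchanged, and set $\chi_{n}=\widehat{H}_{0,n}^{-1}\psi_{n}\widehat{H}_{0,n}$. A direct check should show that $(\varphi_{n},\chi_{n},Z,F',f',g')$ satisfies hypotheses (1)--(10) of Lemma \ref{L:limitfiberwise}: conditions (1),(3),(5) are untouched since they involve only $\varphi_{n}$ and $Z$; conditions (7),(8) hold because $f'$ and $g'$ are $\widehat{H}_{0}$-conjugates of $f$ and $g$ and $\widehat{H}_{0}(F')=F$; condition (9) holds because $\widehat{H}_{0}$ preserves the first factor; and condition (10) appears in its identity form, since $(\pi_{B}^{B\cup C}\times\pi_{B}^{B\cup C})f'=H_{0}^{-1}(\pi_{B}^{B\cup C}\times\pi_{B}^{B\cup C})f=\pi_{B}^{B\cup C}\times\pi_{B}^{B\cup C}$ by the present hypothesis (10), and likewise for $g'$. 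Once these are in place, Lemma \ref{L:limitfiberwise} produces an autohomeomorphism $\overline{H}$ of $R^{B\cup C}\times R^{B\cup C}$ extending $f'|Z\colon Z\to F'$ with $\pi_{1}^{B\cup C}\overline{H}=\pi_{1}^{B\cup C}$ and $(\pi_{B}^{B\cup C}\times\pi_{B}^{B\cup C})\overline{H}=\pi_{B}^{B\cup C}\times\pi_{B}^{B\cup C}$. I would then set $H=\widehat{H}_{0}\overline{H}$, which satisfies $H|Z=\widehat{H}_{0}(f'|Z)=f|Z$, $\pi_{1}^{B\cup C}H=\pi_{1}^{B\cup C}$, and $(\pi_{B}^{B\cup C}\times\pi_{B}^{B\cup C})H=H_{0}(\pi_{B}^{B\cup C}\times\pi_{B}^{B\cup C})$, as required.

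The step I expect to be the main obstacle is verifying conditions (4) and (6) for the conjugated maps $\chi_{n}$, and it is here that the exact form of the conjugation matters. Condition (6) for $F'$ only requires $\operatorname{Im}(\chi_{n})=\widehat{H}_{0,n}^{-1}(\operatorname{Im}(\psi_{n}))$ to be functionally separated from $(\pi_{B\cup C_{n}}^{B\cup C}\times\pi_{B\cup C_{n}}^{B\cup C})(F')=\widehat{H}_{0,n}^{-1}[(\pi_{B\cup C_{n}}^{B\cup C}\times\pi_{B\cup C_{n}}^{B\cup C})(F)]$, which follows from the present hypothesis (6) because functional separation is preserved by the ambient homeomorphism $\widehat{H}_{0,n}^{-1}$. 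The delicate point is condition (4): the naive choice $\widehat{H}_{0,n}^{-1}\psi_{n}$ would already fix the separation but would destroy $(B\cup C_{n})$-invariance, since $\widehat{H}_{0,n}^{-1}$ acts nontrivially on the $B$-coordinates that survive the projection to the lower level. Conjugating on both sides repairs this, and using the $(B\cup C_{n})$-invariance of $\psi_{n+1}$ together with the two commutation relations above one computes
\begin{multline*}
(\pi_{B\cup C_{n}}^{B\cup C_{n+1}}\times\pi_{B\cup C_{n}}^{B\cup C_{n+1}})\chi_{n+1}
=\widehat{H}_{0,n}^{-1}(\pi_{B\cup C_{n}}^{B\cup C_{n+1}}\times\pi_{B\cup C_{n}}^{B\cup C_{n+1}})\widehat{H}_{0,n+1}=\\
\widehat{H}_{0,n}^{-1}\widehat{H}_{0,n}(\pi_{B\cup C_{n}}^{B\cup C_{n+1}}\times\pi_{B\cup C_{n}}^{B\cup C_{n+1}})
=\pi_{B\cup C_{n}}^{B\cup C_{n+1}}\times\pi_{B\cup C_{n}}^{B\cup C_{n+1}},
\end{multline*}
so the twist introduced on the left is cancelled exactly by the twist carried out of the projection on the right. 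Verifying condition (2) for $\chi_{n}$ is immediate from $\pi_{1}\widehat{H}_{0,n}=\pi_{1}$, and with this the reduction to Lemma \ref{L:limitfiberwise} is complete.
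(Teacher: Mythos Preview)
Your proof is correct and follows the same strategy as the paper: reduce to Lemma~\ref{L:limitfiberwise} by conjugating with a global lift of $H_{0}$, then compose the resulting homeomorphism with that lift. The paper builds its lift $G$ as an inverse limit of inductively chosen homeomorphisms $G_{n}$ over $H_{0}$ and, because the $G_{n}$ are not assumed to act only on $B$-coordinates, is forced into the more elaborate construction $\varphi_{n+1}'=G_{n+1}\varphi_{n+1}iL_{n+1}^{-1}(\operatorname{id}\times\pi)$ to recover the invariance and separation conditions. Your choice of the canonical product lift $\widehat{H}_{0}$ (identity on the $C\setminus B$ coordinates) makes the level maps $\widehat{H}_{0,n}$ commute with all the filtration projections on the nose, so a plain conjugation $\chi_{n}=\widehat{H}_{0,n}^{-1}\psi_{n}\widehat{H}_{0,n}$ already satisfies conditions (2), (4), (6); this is cleaner and avoids the auxiliary $L_{n+1}$ and section $i$. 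The paper also conjugates on the $Z$/$\varphi$ side (setting $Z'=G(Z)$, $f'=fG^{-1}$) while you conjugate on the $F$/$\psi$ side, but that is just the obvious symmetry.
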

\begin{proof}
Let $G_{0} = H_{0}$, $G_{n+1} \colon R^{B\cup C_{n+1}}\times R^{B\cup C_{n+1}} \to R^{B\cup C_{n+1}}\times R^{B\cup C_{n+1}}$ be a homeomorphism such that $\left(\pi_{B\cup C_{n}}^{B\cup C_{n+1}}\times  \pi_{B\cup C_{n}}^{B\cup C_{n+1}}\right)G_{n+1} = G_{n}\left(\pi_{B\cup C_{n}}^{B\cup C_{n+1}}\times \pi_{B\cup C_{n}}^{B\cup C_{n+1}}\right)$ and $\pi_{1}^{B\cup C_{n+1}}G_{n+1} = \pi_{1}^{B\cup C_{n+1}}$, $n \in \omega$, and $G = \lim\{ G_{n} \colon n \in \omega\}$. Note that $\pi_{1}^{B\cup C}G = \pi_{1}^{B\cup C}$ and $\left(\pi_{B\cup C_{n}}^{B\cup C}\times \pi_{B\cup C_{n}}^{B\cup C}\right)G = G_{n}\left(\pi_{B\cup C_{n}}^{B\cup C}\times \pi_{B\cup C_{n}}^{B\cup C}\right)$ for each $n \in \omega$. In particular, $\left(\pi_{B\cup C_{0}}^{B\cup C}\times \pi_{B\cup C_{0}}^{B\cup C}\right)G = H_{0}\left(\pi_{B\cup C_{0}}^{B\cup C}\times \pi_{B\cup C_{0}}^{B\cup C}\right)$.

Consider the set $Z^{\prime} =G(Z)$ and maps $f^{\prime} = fG^{-1}$, $g^{\prime} = Gg \colon R^{B\cup C}\times R^{B\cup C} \to R^{B\cup C}\times R^{B\cup C}$. 

\[ f^{\prime}(Z^{\prime}) = f(G^{-1}(G(Z))) = f(Z) = F\; \text{and}\; g^{\prime}(F) = G(g(F)) = G(Z) = Z^{\prime} \]

For any $x \in Z^{\prime}$ choose $y \in Z$ such that $x = G(y)$, then

\[ g^{\prime}(f^{\prime}(x)) = g^{\prime}(f(G^{-1}(G(y)))) = g^{\prime}(f(y)) = G(g(f(y))) = G(y) = x\]
similarly if $x \in F$ then

\[ f^{\prime}(g^{\prime}(x)) = f^{\prime}(G(g(x))) = f(G^{-1}(G(g(x)))) = f(g(x)) = x \]

Thus $g^{\prime}f^{\prime}|Z^{\prime} = \operatorname{id}_{Z^{\prime}}$ and $f^{\prime}g^{\prime}|F = \operatorname{id}_{F}$.

Next note that 
\[ \pi_{1}^{B\cup C}f^{\prime} = \pi_{1}^{B\cup C}fG^{-1} = \pi_{1}^{B\cup C}G^{-1} = \pi_{1}^{B\cup C}\]

\noindent and

\[ \pi_{1}^{B\cup C}g^{\prime} = \pi_{1}^{B\cup C}Gg = \pi_{1}^{B\cup C}g = \pi_{1}^{B\cup C}\]

Also

\begin{multline*}
 \left(\pi_{B\cup C_{0}}^{B\cup C}\times \pi_{B\cup C_{0}}^{B\cup C}\right)f^{\prime} = \left(\pi_{B\cup C_{0}}^{B\cup C}\times \pi_{B\cup C_{0}}^{B\cup C}\right)fG^{-1} = H_{0}\left(\pi_{B\cup C_{0}}^{B\cup C}\times \pi_{B\cup C_{0}}^{B\cup C}\right)G^{-1} =\\ \left(\pi_{B\cup C_{0}}^{B\cup C}\times \pi_{B\cup C_{0}}^{B\cup C}\right)GG^{-1} =
 \pi_{B\cup C_{0}}^{B\cup C}\times \pi_{B\cup C_{0}}^{B\cup C}
\end{multline*}

\noindent Finally 

\begin{multline*}
 \left(\pi_{B\cup C_{0}}^{B\cup C}\times \pi_{B\cup C_{0}}^{B\cup C}\right)g^{\prime} =  \left(\pi_{B\cup C_{0}}^{B\cup C}\times \pi_{B\cup C_{0}}^{B\cup C}\right)Gg = H_{0}\left(\pi_{B\cup C_{0}}^{B\cup C}\times \pi_{B\cup C_{0}}^{B\cup C}\right)g =\\  H_{0}H_{0}^{-1}\left(\pi_{B\cup C_{0}}^{B\cup C}\times \pi_{B\cup C_{0}}^{B\cup C}\right) =  \pi_{B\cup C_{0}}^{B\cup C}\times \pi_{B\cup C_{0}}^{B\cup C}
\end{multline*}

In order to define map $\varphi_{n+1}^{\prime} \colon R^{B\cup C_{n+1}}\times R^{B\cup C_{n+1}} \to R^{B\cup C_{n+1}}\times R^{B\cup C_{n+1}}$ we proceed as follows. First let $L_{n+1} \colon R^{B\cup C_{n+1}}\times R^{B\cup C_{n}} \to R^{B\cup C_{n+1}}\times R^{B\cup C_{n}}$ be a homeomorphism such that $\left(\pi_{B\cup C_{n}}^{B\cup C_{n+1}}\times\operatorname{id}_{R^{B\cup C_{n}}}\right)L_{n+1} = G_{n}\left(\pi_{B\cup C_{n}}^{B\cup C_{n+1}}\times\operatorname{id}_{R^{B\cup C_{n}}}\right)$ and $\pi_{1}L_{n+1} = \pi_{1}$, where $\pi_{1} \colon R^{B\cup C_{n+1}}\times R^{B\cup C_{n}} \to R^{B\cup C_{n+1}}$ is the projection onto the first factor. Let $\varphi_{n+1}^{\prime} = G_{n+1}\varphi_{n+1}iL_{n+1}^{-1}\left(\operatorname{id}_{R^{B\cup C_{n+1}}}\times \pi_{B\cup C_{n}}^{B\cup C_{n+1}}\right)$, where $i \colon R^{B\cup C_{n+1}}\times R^{B\cup C_{n}}\to R^{B\cup C_{n+1}}\times R^{B\cup C_{n+1}}$ is a section of the projection $\operatorname{id}_{R^{B\cup C_{n+1}}}\times \pi_{B\cup C_{n}}^{B\cup C_{n+1}} \colon R^{B\cup C_{n+1}}\times R^{B\cup C_{n+1}} \to R^{B\cup C_{n+1}}\times R^{B\cup C_{n}}$. Note that $\left( \operatorname{id}_{R^{B\cup C_{n+1}}}\times \pi_{B\cup C_{n}}^{B\cup C_{n+1}}\right)G_{n+1} = L_{n+1}\left( \operatorname{id}_{R^{B\cup C_{n+1}}}\times \pi_{B\cup C_{n}}^{B\cup C_{n+1}}\right)$.

Since 

\[ \left(\pi_{B\cup C_{n}}^{B\cup C}\times \pi_{B\cup C_{n}}^{B\cup C}\right)(Z^{\prime}) = \left(\pi_{B\cup C_{n}}^{B\cup C}\times \pi_{B\cup C_{n}}^{B\cup C}\right)(G(Z)) = G_{n}\left(\left(\pi_{B\cup C_{n}}^{B\cup C}\times \pi_{B\cup C_{n}}^{B\cup C}\right)\right)(Z) \]

\noindent  and $\operatorname{Im}(\varphi_{n}^{\prime}) \subseteq G_{n}(\operatorname{Im}(\varphi_{n}))$, it follows that $\left(\pi_{B\cup C_{n}}^{B\cup C}\times \pi_{B\cup C_{n}}^{B\cup C}\right)(Z^{\prime})$ and $\operatorname{Im}(\varphi_{n}^{\prime})$ are functionally separated in $R^{B\cup C_{n}}\times R^{B\cup C_{n}}$, $n \geq 1$.

\begin{multline*}
 \pi_{1}^{B\cup C_{n+1}}\varphi_{n+1}^{\prime} = \pi_{1}^{B\cup C_{n+1}}G_{n+1}\varphi_{n+1}iL_{n+1}^{-1}\left(\operatorname{id}_{R^{B\cup C_{n+1}}}\times \pi_{B\cup C_{n}}^{B\cup C_{n+1}}\right) =\\  \pi_{1}\left( \operatorname{id}_{R^{B\cup C_{n+1}}}\times \pi_{B\cup C_{n}}^{B\cup C_{n+1}}\right)G_{n+1}\varphi_{n+1}iL_{n+1}^{-1}\left(\operatorname{id}_{R^{B\cup C_{n+1}}}\times \pi_{B\cup C_{n}}^{B\cup C_{n+1}}\right) =\\ \pi_{1}L_{n+1}\left( \operatorname{id}_{R^{B\cup C_{n+1}}}\times \pi_{B\cup C_{n}}^{B\cup C_{n+1}}\right)\varphi_{n+1}iL_{n+1}^{-1}\left(\operatorname{id}_{R^{B\cup C_{n+1}}}\times \pi_{B\cup C_{n}}^{B\cup C_{n+1}}\right) = \\ \pi_{1}L_{n+1}\left( \operatorname{id}_{R^{B\cup C_{n+1}}}\times \pi_{B\cup C_{n}}^{B\cup C_{n+1}}\right)iL_{n+1}^{-1}\left(\operatorname{id}_{R^{B\cup C_{n+1}}}\times \pi_{B\cup C_{n}}^{B\cup C_{n+1}}\right) =\\ \pi_{1}L_{n+1}L_{n+1}^{-1}\left(\operatorname{id}_{R^{B\cup C_{n+1}}}\times \pi_{B\cup C_{n}}^{B\cup C_{n+1}}\right) = \pi_{1}\left(\operatorname{id}_{R^{B\cup C_{n+1}}}\times \pi_{B\cup C_{n}}^{B\cup C_{n+1}}\right) = \pi_{1}^{B\cup C_{n+1}}
\end{multline*}

\noindent and

\begin{multline*}
 \left(\pi_{B\cup C_{n}}^{B\cup C_{n+1}}\times \pi_{B\cup C_{n}}^{B\cup C_{n+1}}\right)\varphi_{n+1}^{\prime} =\\  \left(\pi_{B\cup C_{n}}^{B\cup C_{n+1}}\times \operatorname{id}_{R^{B\cup C_{n}}}\right) \left( \operatorname{id}_{R^{B\cup C_{n+1}}}\times \pi_{B\cup C_{n}}^{B\cup C_{n+1}}\right)G_{n+1}\varphi_{n+1}iL_{n+1}^{-1}\left(\operatorname{id}_{R^{B\cup C_{n+1}}}\times \pi_{B\cup C_{n}}^{B\cup C_{n+1}}\right) =\\
\left(\pi_{B\cup C_{n}}^{B\cup C_{n+1}}\times \operatorname{id}_{R^{B\cup C_{n}}}\right)L_{n+1}\left( \operatorname{id}_{R^{B\cup C_{n+1}}}\times \pi_{B\cup C_{n}}^{B\cup C_{n+1}}\right)\varphi_{n+1}iL_{n+1}^{-1}\left(\operatorname{id}_{R^{B\cup C_{n+1}}}\times \pi_{B\cup C_{n}}^{B\cup C_{n+1}}\right) = \\
\left(\pi_{B\cup C_{n}}^{B\cup C_{n+1}}\times \operatorname{id}_{R^{B\cup C_{n}}}\right)L_{n+1}\left( \operatorname{id}_{R^{B\cup C_{n+1}}}\times \pi_{B\cup C_{n}}^{B\cup C_{n+1}}\right)iL_{n+1}^{-1}\left(\operatorname{id}_{R^{B\cup C_{n+1}}}\times \pi_{B\cup C_{n}}^{B\cup C_{n+1}}\right) =\\
 \left(\pi_{B\cup C_{n}}^{B\cup C_{n+1}}\times \operatorname{id}_{R^{B\cup C_{n}}}\right)L_{n+1}L_{n+1}^{-1}\left(\operatorname{id}_{R^{B\cup C_{n+1}}}\times \pi_{B\cup C_{n}}^{B\cup C_{n+1}}\right) =\\ \left(\pi_{B\cup C_{n}}^{B\cup C_{n+1}}\times \operatorname{id}_{R^{B\cup C_{n}}}\right)\left(\operatorname{id}_{R^{B\cup C_{n+1}}}\times \pi_{B\cup C_{n}}^{B\cup C_{n+1}}\right) = \pi_{B\cup C_{n}}^{B\cup C_{n+1}}\times \pi_{B\cup C_{n}}^{B\cup C_{n+1}}
\end{multline*}

By Lemma \ref{L:limitfiberwise}, there exists a homeomorphism $S \colon R^{B\cup C}\times R^{B\cup C} \to R^{B\cup C}\times R^{B\cup C}$ such that $\pi_{1}^{B\cup C}S = \pi_{1}^{B\cup C}$, $\left(\pi_{B\cup C_{0}}^{B\cup C}\times \pi_{B\cup C_{0}}^{B\cup C}\right) S = \pi_{B\cup C_{0}}^{B\cup C}\times \pi_{B\cup C_{0}}^{B\cup C}$ and $S|Z^{\prime} = f^{\prime}|Z^{\prime}$. It only remains to observe that the homeomorphism $H = SG$ has all the required properties.
\end{proof}

\begin{thm}\label{T:ztauunknottingsigmaf}
Let $\tau \geq \omega$ and $Z$ and $F$ be $C$-embedded fibered $Z_{\tau}$-sets of $R^{\tau} \times R^{\tau}$ with respect to the projection $\pi_{1} \colon R^{\tau} \times R^{\tau} \to R^{\tau}$. Let also $h \colon Z \to F$ be a homeomorphism such that $\pi_{1}h = \pi_{1}|Z$. Then there exists a homeomorphism $H \colon R^{\tau}\times R^{\tau} \to R^{\tau}\times R^{\tau}$ such that $H|Z = h$ and $\pi_{1}H = \pi_{1}$.
\end{thm}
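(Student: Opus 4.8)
The plan is to reduce the statement, by a transfinite recursion along a continuous chain of countable coordinate blocks, to the countable fibered situation handled by Lemma~\ref{L:limitfiberwise1}, whose single step rests on Toru\'{n}czyk's Theorem~\ref{T:fz-setunknottingR}. First I would manufacture the algebraic data the limit lemmas consume. Since $R^{\tau}$ is an absolute retract and $Z$ is closed and $C$-embedded, each coordinate of $\pi_{2}h\colon Z\to R^{\tau}$ extends over $R^{\tau}\times R^{\tau}$, and assembling these extensions and prepending $\pi_{1}$ gives a map $f\colon R^{\tau}\times R^{\tau}\to R^{\tau}\times R^{\tau}$ with $\pi_{1}f=\pi_{1}$ and $f|Z=h$. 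Extending $\pi_{2}h^{-1}$ the same way yields $g$ with $\pi_{1}g=\pi_{1}$ and $g|F=h^{-1}$. Then $f(Z)=F$, $g(F)=Z$, $gf|Z=\operatorname{id}_{Z}$, $fg|F=\operatorname{id}_{F}$, so $f,g$ are exactly the fibered maps occurring in Lemmas~\ref{L:2} and \ref{L:limitfiberwise1}.

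Next I would set up the recursion. By Lemma~\ref{L:2} and \cite[Proposition 1.1.27]{chibook} the countable sets through which both $f$ and $g$ factor, and through which the functional separations witnessing the fibered $Z_{\tau}$-property of $Z$ and of $F$ are realized, form a cofinal $\omega$-closed family; enlarging it so that the projections $Z_{A}=\operatorname{cl}((\pi_{A}^{\tau}\times\pi_{A}^{\tau})(Z))$ and $F_{A}$ stay closed and $C$-embedded, I obtain an $\omega$-closed cofinal family of \emph{admissible} subsets. Enumerating $\tau=\{t_{\alpha}\colon\alpha<\tau\}$ and closing up $A_{\xi}\cup\{t_{\xi}\}$ at each successor stage, I build a continuous increasing chain $\{A_{\xi}\colon\xi\le\tau\}$ of admissible sets with $A_{0}=\varnothing$, $A_{\tau}=\tau$, and countable increments $C^{\xi}=A_{\xi+1}\setminus A_{\xi}$. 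Along it I construct $\pi_{1}$-preserving autohomeomorphisms $H_{\xi}$ of $R^{A_{\xi}}\times R^{A_{\xi}}$ that cohere, in the sense $(\pi_{A_{\eta}}^{A_{\xi}}\times\pi_{A_{\eta}}^{A_{\xi}})H_{\xi}=H_{\eta}(\pi_{A_{\eta}}^{A_{\xi}}\times\pi_{A_{\eta}}^{A_{\xi}})$ for $\eta<\xi$, and that satisfy the invariant $H_{\xi}|Z_{A_{\xi}}=h_{A_{\xi}}$, where $h_{A_{\xi}}\colon Z_{A_{\xi}}\to F_{A_{\xi}}$ is induced by $h$. At a limit $\eta$ the map $H_{\eta}=\lim\{H_{\xi}\colon\xi<\eta\}$ is an honest homeomorphism by coherence (this handles any cofinality), and the final $H=H_{\tau}$ is the desired extension.

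The successor step $\xi\mapsto\xi+1$ is where everything is applied. Writing $B=A_{\xi}$ and $C=C^{\xi}=\bigcup_{n}C_{n}$ as an increasing countable union with $C_{0}=\varnothing$, I first produce fresh fiber-preserving maps $f^{\xi},g^{\xi}$ on $R^{B\cup C}\times R^{B\cup C}$ realizing $h_{A_{\xi+1}}$ on $Z_{A_{\xi+1}}$ and having base action $H_{\xi}$, that is $(\pi_{B}^{B\cup C}\times\pi_{B}^{B\cup C})f^{\xi}=H_{\xi}(\pi_{B}^{B\cup C}\times\pi_{B}^{B\cup C})$ and $f^{\xi}|Z_{A_{\xi+1}}=h_{A_{\xi+1}}$ (dually for $g^{\xi}$ with $H_{\xi}^{-1}$). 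These two prescriptions agree on $Z_{A_{\xi+1}}$ precisely because the invariant $H_{\xi}|Z_{A_{\xi}}=h_{A_{\xi}}$ and coherence of the $h_{A}$ force $(\pi_{B}^{B\cup C}\times\pi_{B}^{B\cup C})h_{A_{\xi+1}}=H_{\xi}(\pi_{B}^{B\cup C}\times\pi_{B}^{B\cup C})|Z_{A_{\xi+1}}$, and the only remaining freedom, the second-factor $C$-coordinates, is filled by extending a map into the absolute retract $R^{C}$ off the $C$-embedded set $Z_{A_{\xi+1}}$; thus $f^{\xi},g^{\xi}$ meet conditions (7)--(10) of Lemma~\ref{L:limitfiberwise1} with $H_{0}=H_{\xi}$. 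The section data $\varphi_{n},\psi_{n}$ of conditions (1)--(6) I extract from Definition~\ref{D:fzset}: applied to the basic neighborhoods of Lemma~\ref{L:description} it yields $\pi_{1}$-preserving maps agreeing with the identity on the coordinates $B\cup C_{n}$ whose images are functionally separated from $Z$ (resp.\ $F$), and factoring these through admissible countable sets by the Spectral Theorem \cite[Theorem 1.3.6]{chibook} together with \cite[Corollary 1.1.28]{chibook}, exactly as in the passage to conditions (11)--(12) of Lemma~\ref{L:3}, gives the required $\varphi_{n},\psi_{n}$. Lemma~\ref{L:limitfiberwise1} then delivers $H_{\xi+1}$ with $H_{\xi+1}|Z_{A_{\xi+1}}=h_{A_{\xi+1}}$, $\pi_{1}H_{\xi+1}=\pi_{1}$, and base action $H_{\xi}$, which are exactly the coherence and invariant needed to continue.

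The step I expect to be the main obstacle is the uniform manufacture of the countable-level section data $\varphi_{n},\psi_{n}$ out of the global fibered $Z_{\tau}$-property: one must see that the pushing-off maps granted by Definition~\ref{D:fzset} can be taken to fix the already-treated coordinates and, after spectral factorization, still witness functional separation at the countable level, so that (via Proposition~\ref{P:zsetsection}) the projections $Z_{A_{\xi+1}},F_{A_{\xi+1}}$ become fibered $Z$-sets to which Toru\'{n}czyk's theorem applies inside Lemma~\ref{L:limitfiberwise1}. A secondary technical point is arranging the admissible family so that every $Z_{A_{\xi}},F_{A_{\xi}}$ stays closed and $C$-embedded, which is what legitimizes both the extension producing $f^{\xi},g^{\xi}$ and the coherence of the induced $h_{A_{\xi}}$. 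Granting these, the chain closes up and $H=H_{\tau}$ is the required $\pi_{1}$-preserving homeomorphism extending $h$.
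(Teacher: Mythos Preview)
Your proposal is correct and follows essentially the same route as the paper: extend $h$ and $h^{-1}$ to fiber-preserving global maps $f,g$, run a transfinite induction along a continuous chain of coordinate blocks, and apply Lemma~\ref{L:limitfiberwise1} (which in turn rests on Proposition~\ref{P:zsetsection} and Theorem~\ref{T:fz-setunknottingR}) at each successor step, harvesting the section data $\varphi_{n},\psi_{n}$ from the fibered $Z_{\tau}$-property via Lemma~\ref{L:description} and spectral factorization. The only substantive variation is that at the successor stage you manufacture fresh maps $f^{\xi},g^{\xi}$ with base action exactly $H_{\xi}$ (so that condition~(10) of Lemma~\ref{L:limitfiberwise1} holds on the nose), whereas the paper works throughout with the factored global maps $f_{A_{\beta+1}},g_{A_{\beta+1}}$; your version is arguably the cleaner way to feed the lemma, and the two are interchangeable once one observes that the only coordinates on which $f^{\xi}$ can differ from $f_{A_{\xi+1}}$ are the second-factor $B$-coordinates, which are irrelevant to the rest of the verification.
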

\begin{proof}
Let $|A| = \tau$. As noted above, for $\tau = \omega$ statement is known. Below we assume that $\tau > \omega$. 

Since the projection $\pi_{1}^{A} \colon R^{A} \times R^{A}\to R^{A}$ is soft, $\pi_{1}^{A}h = \pi_{1}^{A}|Z$ and $Z$ is $C$-embedded in $R^{A}$, it follows from definition of softness \cite[Proposition 6.1.16]{chibook} that there exists a map $f \colon R^{A}\times R^{A} \to R^{A}\times R^{A}$ such that $\pi_{1}^{A}f = \pi_{1}^{A}$ and $f|Z = h$. Similarly there is a map $g \colon R^{A}\times R^{A} \to R^{A}\times R^{A}$ such that $\pi_{1}^{A}g = \pi_{1}^{A}$ and $g|F = h^{-1}$. Note that since $\pi_{1}^{A}f = \pi_{1}^{A} = \pi_{1}^{A}g$, for any $C \in {\mathcal M}_{f,g}$ in addition to properties listed in Lemma \ref{L:2}, we also have $\pi_{1}^{C}f = \pi_{1}^{C} = \pi_{1}^{C}g$.

Let $A= \{ a_{\alpha} \colon \alpha < \tau\}$ be a well-ordering of $A$. 

Our first goal is to construct a countable subset $A_{0} \in {\mathcal M}_{f,g}$ such that $a_{0} \in A_{0}$ and a homeomorphism $H_{0} \colon R^{A_{0}}\times R^{A_{0}} \to R^{A_{0}}\times R^{A_{0}}$ such that $H_{0}|\operatorname{cl}_{R^{A_{0}}\times R^{A_{0}}}\left(\pi_{A_{0}}^{A}\times \pi_{A_{0}}^{A}\right)(Z) = f_{A_{0}}|\operatorname{cl}_{R^{A_{0}}\times R^{A_{0}}}\left(\pi_{A_{0}}^{A}\times \pi_{A_{0}}^{A}\right)(Z)$.

 Start by choosing $C_{0} \in {\mathcal M}_{f,g}$ such that $a_{0} \in C_{0}$. Since $Z$ and $F$ are a fibered $Z_{\tau}$-sets, there exist, by Lemma \ref{L:description}, maps $\varphi, \psi \colon R^{A}\times R^{A} \to R^{A}\times R^{A}$ such that 

\begin{enumerate}
  \item
$\left(\pi_{C_{0}}^{A}\times \pi_{C_{0}}^{A}\right)\varphi = \pi_{C_{0}}^{A}\times \pi_{C_{0}}^{A}$; 
  \item 
$\left(\pi_{C_{0}}^{A}\times \pi_{C_{0}}^{A}\right)\psi = \pi_{C_{0}}^{A}\times \pi_{C_{0}}^{A}$; 
  \item 
$\pi_{1}^{A}\varphi = \pi_{1}^{A}$;
  \item 
$\pi_{1}^{A}\psi = \pi_{1}^{A}$;
  \item 
 $Z$ and $\operatorname{Im}(\varphi)$ are functionally separated in $R^{A}\times R^{A}$;
  \item 
 $F$ and $\operatorname{Im}(\psi)$ are functionally separated in $R^{A}\times R^{A}$.
\end{enumerate}  

Using Lemmas \ref{L:1} and \ref{L:2}, we can find a set $C_{1} \in {\mathcal M}_{f,g}\cap {\mathcal M}_{\varphi}\cap {\mathcal M}_{\psi}$ and maps $\varphi_{1}, \psi_{1} \colon R^{C_{1}}\times R^{C_{1}} \to R^{C_{1}}\times R^{C_{1}}$ such that $\left(\pi_{C_{1}}^{A}\times \pi_{C_{1}}^{A}\right)\varphi = \varphi_{1}\left(\pi_{C_{1}}^{A}\times \pi_{C_{1}}^{A}\right)$ and $\left(\pi_{C_{1}}^{A}\times \pi_{C_{1}}^{A}\right)\psi = \psi_{1}\left(\pi_{C_{1}}^{A}\times \pi_{C_{1}}^{A}\right)$. It is easy to see that
\begin{itemize}
  \item[(7)]
$\pi_{1}^{C_{1}}\varphi_{1} = \pi_{1}^{C_{1}}$;
  \item[(8)]
$\left(\pi_{C_{0}}^{C_{1}}\times \pi_{C_{0}}^{C_{1}}\right)\varphi_{1} = \left(\pi_{C_{0}}^{C_{1}}\times \pi_{C_{0}}^{C_{1}}\right)$;
  \item[(9)] 
$\pi_{1}^{C_{1}}\psi_{1} = \pi_{1}^{C_{1}}$;
  \item[(10)]
$\left(\pi_{C_{0}}^{C_{1}}\times \pi_{C_{0}}^{C_{1}}\right)\psi_{1} = \left(\pi_{C_{0}}^{C_{1}}\times \pi_{C_{0}}^{C_{1}}\right)$;
  \item[(11)]
$\left(\pi_{C_{1}}^{A}\times\pi_{C_{1}}^{A}\right)(Z)$ and $\operatorname{Im}(\varphi_{1})$ have disjoint closures in $R^{C_{1}}\times R^{C_{1}}$; 
  \item[(12)] 
$\left(\pi_{C_{1}}^{A}\times\pi_{C_{1}}^{A}\right)(F)$ and $\operatorname{Im}(\psi_{1})$ have disjoint closures in $R^{C_{1}}\times R^{C_{1}}$ 
\end{itemize}

Continuing in this manner we can construct increasing sequence $\{ C_{n} \colon n \in \omega\}$ of elements of ${\mathcal M}_{f,g}$ and maps $\varphi_{n}, \psi_{n} \colon R^{C_{n}}\times R^{C_{n}} \to R^{C_{n}}\times R^{C_{n}}$ satisfying the following properties:

\begin{itemize}
  \item[(i)]
$\pi_{1}^{C_{n+1}}\varphi_{n+1} = \pi_{1}^{C_{n+1}}$;
  \item[(ii)]
$\left(\pi_{C_{n}}^{C_{n+1}}\times \pi_{C_{n}}^{C_{n+1}}\right)\varphi_{n+1} = \pi_{C_{n}}^{C_{n+1}}\times \pi_{C_{n}}^{C_{n+1}}$;
  \item[(iii)] 
$\pi_{1}^{C_{n+1}}\psi_{n+1} = \pi_{1}^{C_{n+1}}$;
  \item[(iv)]
$\left(\pi_{C_{n}}^{C_{n+1}}\times \pi_{C_{n}}^{C_{n+1}}\right)\psi_{n+1} = \pi_{C_{n}}^{C_{n+1}}\times \pi_{C_{n}}^{C_{n+1}}$;
  \item[(v)]
$\left(\pi_{C_{n+1}}^{A}\times \pi_{C_{n+1}}^{A}\right)(Z)$ and $\operatorname{Im}(\varphi_{n+1})$ have disjoint closures in $R^{C_{n+1}}\times R^{C_{n+1}}$; 
  \item[(vi)] 
$\left(\pi_{C_{n+1}}^{A}\times \pi_{C_{n+1}}^{A}\right)(F)$ and $\operatorname{Im}(\psi_{n+1})$ have disjoint closures in $R^{C_{n+1}}\times R^{C_{n+1}}$; 
\end{itemize}

Let $A_{0} = \cup\{ C_{n} \colon n \in \omega\}$. Note that $A_{0} \in {\mathcal M}_{f,g}$. By Lemma \ref{L:limitfiberwise}, the homeomorphism 

\[ f_{A_{0}}|... \colon \operatorname{cl}_{{\mathbb R}^{A_{0}}\times R^{A_{0}}}\left(\pi^{A}_{A_{0}}\times \pi_{A_{0}}^{A}\right)(Z) \to \operatorname{cl}_{{\mathbb R}^{A_{0}}\times R^{A_{0}}}\left(\pi^{A}_{A_{0}}\times \pi_{A_{0}}^{A}\right)(F) \] 

\noindent can be extended to a homeomorphism $H_{0} \colon R^{A_{0}}\times R^{A_{0}} \to R^{A_{0}}\times R^{A_{0}}$ so that $\pi_{1}^{A_{0}}H_{0} = \pi_{1}^{A_{0}}$.

Suppose that for each $\beta < \alpha < \tau$ we have already constructed subsets $A_{\beta} \subseteq A$ and autohomeomorphisms $H_{\beta} \colon R^{A_{\beta}}\times R^{A_{\beta}} \to R^{A_{\beta}}\times R^{A_{\beta}}$, $\beta < \tau$, satisfying the following properties:
\begin{itemize}
  \item[(1)$_{\beta}$]
 $A_{\beta} \cup\{ a_{\beta}\} \subseteq A_{\beta +1}$ and $|A_{\beta}| < \tau$ for each $\beta < \alpha$;
  \item[(2)$_{\beta}$] 
 $A_{\beta} = \cup\{ A_{\gamma} \colon \gamma < \beta\}$ whenever $\beta <\alpha$ is a limit ordinal;
  \item[(4)$_{\beta}$] 
 $\left(\pi_{A_{\beta}}^{A_{\beta +1}}\times \pi_{A_{\beta}}^{A_{\beta +1}}\right)H_{\beta+1} = H_{\beta}\left(\pi_{A_{\beta}}^{A_{\beta +1}}\times \pi_{A_{\beta}}^{A_{\beta +1}}\right)$ for each $\beta < \alpha$;
  \item[(5)$_{\beta}$]
 $H_{\beta} = \lim\{ H_{\gamma} \colon \gamma < \beta\}$ whenever $\beta <\alpha$ is a limit ordinal;
  \item[(6)$_{\beta}$]
 $\left(\pi_{A_{\beta}}^{A_{\beta +1}} \times \pi_{A_{\beta}}^{A_{\beta +1}}\right)H_{\beta +1} = H_{\beta}\left(\pi_{A_{\beta}}^{A_{\beta +1}} \times \pi_{A_{\beta}}^{A_{\beta +1}}\right)$ ; 
\item[(7)$_{\beta}$]
$\pi_{1}^{A_{\beta}}H_{\beta} = \pi_{1}^{A_{\beta}}$;
  \item[(8)$_{\beta}$]
  $H_{\beta}|\operatorname{cl}_{{\mathbb R}^{A_{\beta}}\times R^{A_{\beta}}}\left(\pi_{A_{\beta}}^{A}\times \pi_{A_{\beta}}^{A}\right)(Z) = f_{A_{\beta}}|\operatorname{cl}_{{\mathbb R}^{A_{\beta}}\times R^{A_{\beta}}}\left(\pi_{A_{\beta}}^{A}\times \pi_{A_{\beta}}^{A}\right)(Z)$ for each $\beta < \alpha$.  
\end{itemize}

In order to complete inductive step we need to construct $A_{\alpha}$ and $H_{\alpha}$.

Case $\alpha = \beta +1$. Let $C_{\beta, 0} = \emptyset$. Assuming that $C_{\beta,n} \in {\mathcal M}_{f,g}$ has already been constructed, we proceed as follows. Since $|A_{\beta} \cup C_{\beta,n}|< \tau$ and since $Z$ and $F$ are fibered $Z_{\tau}$-sets, there exist maps $\varphi, \psi \colon R^{A} \times R^{A}\times R^{A}$ such that $\pi_{1}^{A}\varphi = \pi_{1}^{A} = \pi_{1}^{A}\psi$, $\left( \pi_{A_{\beta}\cup C_{\beta,n}}^{A} \times \pi_{A_{\beta}\cup C_{\beta,n}}^{A}\right)\varphi = \pi_{A_{\beta}\cup C_{\beta,n}}^{A} \times \pi_{A_{\beta}\cup C_{\beta,n}}^{A} = \left( \pi_{A_{\beta}\cup C_{\beta}}^{A} \times \pi_{A_{\beta}\cup C_{\beta}}^{A}\right)\psi$, $Z$ and $\operatorname{Im}(\varphi)$, as well as $F$ and $\operatorname{Im}(\psi)$ are functionally separated in $R^{A}\times R^{A}$. Let $C_{\beta, n+1}$ be any element of  ${\mathcal M}_{f,g,}\cap {\mathcal M}_{\varphi}\cap {\mathcal M}_{\psi}$, containing $a_{\beta}$.  Let $A_{\beta +1} = A_{\beta} \cup \left(\cup\{ C_{\beta,n}\colon n \in \omega\}\right)$. Lemma \ref{L:limitfiberwise1} guarantees that there exists a homeomorphism $H_{\beta +1} \colon R^{A_{\beta +1}}\times R^{A_{\beta +1}} \to R^{A_{\beta +1}}\times R^{A_{\beta +1}}$ satisfying the needed conditions for the ordinal $\beta +1$.

Case of a limit ordinal $\alpha$. Let $A_{\alpha} = \cup\{A_{\beta} \colon \beta < \alpha\}$ and $H_{\alpha} = \lim\{ H_{\beta} \colon \beta < \alpha\}$.

This completes inductive construction of $A_{\alpha}$ and $H_{\alpha}$, $\alpha < \tau$. The required homeomorphism $H \colon R^{A}\times R^{A} \to R^{A}\times R^{A}$ can now be defined as $H = \lim\{ H_{\alpha} \colon \alpha < \tau\}$. 
\end{proof}

\begin{cor}\label{C:1}
Let $\tau > \omega$ and $h \colon Z \to F$ be a homeomorphism between $C$-embedded $Z_{\tau}$-sets in $R^{\tau}$. Then there is an autohomeomorphism $H$ of $R^{\tau}$ such that $H|Z = h$.
\end{cor}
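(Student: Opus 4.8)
The plan is to deduce the unfibered statement from the fibered Theorem \ref{T:ztauunknottingsigmaf} by collapsing the base of the fibration to a point. Since $\tau$ is infinite we may identify $R^{\tau}$ with $R^{\tau}\times R^{\tau}$ and regard $\pi_{1}\colon R^{\tau}\times R^{\tau}\to R^{\tau}$ as the projection onto the first factor. Fix the point $0\in R^{\tau}$ of the first factor and place the given sets into the single fibre over it: set $Z_{0}=\{0\}\times Z$, $F_{0}=\{0\}\times F$, and define $h_{0}\colon Z_{0}\to F_{0}$ by $h_{0}(0,z)=(0,h(z))$. Because $\pi_{1}$ is constantly $0$ on $Z_{0}$, the compatibility $\pi_{1}h_{0}=\pi_{1}|Z_{0}$ holds automatically. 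Granting that $Z_{0}$ and $F_{0}$ are $C$-embedded fibered $Z_{\tau}$-sets with respect to $\pi_{1}$, Theorem \ref{T:ztauunknottingsigmaf} yields a homeomorphism $H_{0}\colon R^{\tau}\times R^{\tau}\to R^{\tau}\times R^{\tau}$ with $H_{0}|Z_{0}=h_{0}$ and $\pi_{1}H_{0}=\pi_{1}$. Fibre-preservation forces $H_{0}$ to carry the fibre $\{0\}\times R^{\tau}$ onto itself, so its restriction to this fibre, transported back along $R^{\tau}\cong\{0\}\times R^{\tau}$, is the desired autohomeomorphism $H$ of $R^{\tau}$; by construction $H|Z=h$.

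Two of the three hypotheses needed are routine. For $C$-embeddedness I would argue by transitivity: the slice $\{0\}\times R^{\tau}$ is a retract of $R^{\tau}\times R^{\tau}$ (via $(s,t)\mapsto(0,t)$) and hence $C$-embedded, while $Z$ is $C$-embedded in $R^{\tau}\cong\{0\}\times R^{\tau}$ by hypothesis, so $Z_{0}$ is $C$-embedded in $R^{\tau}\times R^{\tau}$; the same argument applies to $F_{0}$. That $h_{0}$ is a homeomorphism onto $F_{0}$ and that the restriction of $H_{0}$ to the invariant fibre is well defined are both immediate from the formulas above.

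The crux is to verify that a fibre-copy $Z_{0}=\{0\}\times Z$ of a $C$-embedded $Z_{\tau}$-set $Z\subseteq R^{\tau}$ is a \emph{fibered} $Z_{\tau}$-set with respect to $\pi_{1}$. Unwinding Definition \ref{D:fzset} through Lemma \ref{L:description}, one must show that for every fibre-preserving $f_{0}$, written $f_{0}(s,t)=(s,\phi_{0}(s,t))$, and every basic neighbourhood $B(f_{0},S)$ with $|S|<\tau$, there is a fibre-preserving $f$ agreeing with $f_{0}$ on the coordinates of $S$ whose image is functionally separated from $Z_{0}$. Pushing the single fibre map $\phi_{0}(0,\cdot)\colon R^{\tau}\to R^{\tau}$ off $Z$ is possible because $Z$ is a $Z_{\tau}$-set; the genuine difficulty, which I expect to be the main obstacle, is that the image of $f$ over the remaining fibres is only partially controlled (its $S$-coordinates are pinned to $\phi_{0}$), so a naive coordinatewise separating function need not vanish on all of $f(R^{\tau}\times R^{\tau})$. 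I would resolve this exactly as the body of the paper resolves the analogous difficulties for the Theorem: use the Spectral Theorem for factorizing $\omega$-spectra to factor $f_{0}$, $Z$ and the pushing data through a countable subproduct, where $Z$ becomes an honest $Z$-set in a copy of $R^{\omega}$ and Proposition \ref{P:zsetsection} supplies fibrewise sections missing $Z$; assemble these over an increasing $\omega$-tower as in Lemmas \ref{L:3}--\ref{L:limitfiberwise1}, and finally run the transfinite induction of length $\tau$ from the proof of Theorem \ref{T:ztauunknottingsigmaf}. This reduction to the Polish case is what converts the ``$<\tau$''-sized neighbourhood into countably many manageable pieces and produces the single functional separation required.
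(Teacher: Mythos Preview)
Your overall strategy---place $Z$ and $F$ in the fibre over a fixed point, apply Theorem \ref{T:ztauunknottingsigmaf}, and restrict the resulting homeomorphism to that fibre---is exactly the paper's proof; the paper too simply asserts that $\{a\}\times Z$ and $\{a\}\times F$ are $C$-embedded fibered $Z_{\tau}$-sets without further comment.

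Where you go astray is in the verification you flag as the ``crux.'' There is no need for the Spectral Theorem, Proposition \ref{P:zsetsection}, or the tower machinery of Lemmas \ref{L:3}--\ref{L:limitfiberwise1}; indeed, proposing to rerun the entire engine behind Theorem \ref{T:ztauunknottingsigmaf} merely to check one of its hypotheses is redundant in spirit. The verification is one line. Given a fibre-preserving $f_{0}(s,t)=(s,\phi_{0}(s,t))$ and $S$ with $|S|<\tau$, write $S_{2}$ for the coordinates of $S$ lying in the second factor. Since $Z$ is a $Z_{\tau}$-set in $R^{\tau}$, pick $\psi\colon R^{\tau}\to R^{\tau}$ with $\pi_{S_{2}}\psi=\pi_{S_{2}}$ and $\psi(R^{\tau})$ functionally separated from $Z$. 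Then $g(s,t)=(s,\psi(\phi_{0}(s,t)))$ is fibre-preserving, lies in $B(f_{0},S)$, and has image contained in $R^{\tau}\times\psi(R^{\tau})$; any function separating $\psi(R^{\tau})$ from $Z$ pulls back along $\pi_{2}$ to separate $g(R^{\tau}\times R^{\tau})$ from $R^{\tau}\times Z\supseteq Z_{0}$. Your worry that ``the image over the remaining fibres is only partially controlled'' dissolves because the second coordinate of $g$ lands in $\psi(R^{\tau})$ uniformly over every fibre.
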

\begin{proof}
Let $a \in R^{\tau}$. Note that $Z_{a} = \{ a\} \times Z$ and $F_{a} = \{ a \}\times F$ are $C$-embedded fibered $Z_{\tau}$-sets in $R^{\tau}\times R^{\tau}$ with respect to the projection $\pi_{1} \colon R^{\tau} \times R^{\tau} \to R^{\tau}$. The homeomorphism $h_{a} \colon Z_{a} \to F_{a}$, defined by letting $h_{a}(a,x) = (a,h(x))$, $x \in Z$, acts fiberwise with respect to the projection $\pi_{1}$. By Theorem \ref{T:ztauunknottingsigmaf}, there is a homeomorphism $H_{a} \colon R^{\tau}\times R^{\tau} \to R^{\tau}\times R^{\tau}$ such that $H_{a}|Z_{a}= h_{a}$ and $\pi_{1}H_{a} = \pi_{1}$. Then the required extension of $h$ can be defined by letting $H(x) = H_{a}(a,x)$, $x \in R^{\tau}$.
\end{proof}

We want to note that Theorem \ref{T:ztauunknottingsigmaf} remains true for the projection $\pi_{1} \colon X \times R^{\tau} \to X$ where $X$ is an $R^{\tau}$-manifold. Similarly homeomorphisms between $Z_{\tau}$-sets of an $R^{\tau}$-manifold admit extensions to the ambient manifold.

The following statement for compact $Z$ and $F$ was proved in \cite{vds}.

\begin{cor}\label{C:iff}
Let $\tau > \omega$ and $Z$ and $F$ be closed $C$-embedded $Z_{\tau}$-sets in $R^{\tau}$. Then the following conditions are equivalent:
\begin{itemize}
  \item[(i)]
 $Z$ and $F$ are homeomorphic;
  \item[(ii)] 
 $R^{\tau}\setminus Z$ and $R^{\tau}\setminus F$ are homeomorphic.  
\end{itemize}
\end{cor}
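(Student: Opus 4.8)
The plan is to establish the two implications separately: (i)$\Rightarrow$(ii) is an immediate consequence of the unknotting theorem proved above, while (ii)$\Rightarrow$(i) carries the real content and is where I expect the main difficulty.

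For (i)$\Rightarrow$(ii), suppose $h \colon Z \to F$ is a homeomorphism. Since $\tau > \omega$ and $Z$, $F$ are $C$-embedded $Z_{\tau}$-sets in $R^{\tau}$, Corollary \ref{C:1} extends $h$ to an autohomeomorphism $H$ of $R^{\tau}$ with $H|Z = h$, and in particular $H(Z) = F$. Then $H$ restricts to a homeomorphism $R^{\tau}\setminus Z \to R^{\tau}\setminus H(Z) = R^{\tau}\setminus F$, which is exactly (ii). This is precisely the direction where the paper's main result does the work, replacing the compactness hypothesis of \cite{vds} by the weaker $C$-embedded $Z_{\tau}$-set hypothesis.

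The converse (ii)$\Rightarrow$(i) is the main obstacle. The point is that a homeomorphism $g \colon R^{\tau}\setminus Z \to R^{\tau}\setminus F$ need not extend continuously over $Z$, so one cannot simply pass to boundaries; instead $Z$ must be recovered intrinsically from the topology of its complement. Since a $Z_{\tau}$-set is nowhere dense, $R^{\tau}\setminus Z$ is a dense open subset of $R^{\tau}$, and $Z$ is exactly the remainder added in passing from $R^{\tau}\setminus Z$ to $R^{\tau}$. The aim is to show that this remainder is determined, up to homeomorphism, by the homeomorphism type of $R^{\tau}\setminus Z$ alone, so that $g$ induces a homeomorphism $Z \cong F$.

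To make this precise I would reduce to the separable case already handled in \cite{vds}. Using the Spectral Theorem exactly as in Lemmas \ref{L:1} and \ref{L:2}, present $R^{\tau}$ as the limit of an $\omega$-spectrum of copies of $R^{\omega}$ over a cofinal $\omega$-closed family of countable $C \subseteq A$; after passing to a cofinal subfamily on which both $g$ and $g^{-1}$ factor, one obtains compatible homeomorphisms $g_{C}$ of the corresponding separable complements, applies the base case of \cite{vds} at each level to get coherent homeomorphisms of the separable remainders $Z_{C} \cong F_{C}$, and takes the limit to conclude $Z \cong F$. The delicate point — and the step I expect to be hardest — is that complements are not as well behaved as the ambient products under inverse limits: the projection $\pi_{C}^{A}(R^{\tau}\setminus Z)$ generally meets $\operatorname{cl}\,\pi_{C}^{A}(Z)$, so the cofinal family must be chosen with enough control on $\pi_{C}^{A}(Z)$, $\pi_{C}^{A}(F)$ and on the behavior of $g$ near them that the factored maps are genuine homeomorphisms of complements to which \cite{vds} applies, and that the levelwise homeomorphisms fit together in the limit.
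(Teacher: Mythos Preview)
Your argument for (i)$\Rightarrow$(ii) is fine and coincides with the paper's.

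For (ii)$\Rightarrow$(i), however, your plan has a genuine gap. First, a misreading: \cite{vds} does not treat a ``separable base case''; it proves the present corollary for \emph{compact} $Z,F$ in $R^{\tau}$ with $\tau>\omega$. More seriously, the separable analogue you want to reduce to is simply false. In $R^{\omega}$ every $Z$-set is negligible (see \cite{bp}): $R^{\omega}\setminus Z\cong R^{\omega}$ for any $Z$-set $Z$. Hence any two $Z$-sets---a point and a Cantor set, say---have homeomorphic complements without being homeomorphic. So no spectral reduction to countable coordinate sets can recover $Z$ from $R^{\tau}\setminus Z$; at the separable levels there is nothing to recover. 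The difficulty you flagged about complements behaving badly under projections is not a technicality to be overcome but a symptom of this.

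The paper's proof of (ii)$\Rightarrow$(i) avoids all of this by a short functorial argument that is intrinsically non-separable. Since $Z$ is a $Z_{\tau}$-set and $\tau>\omega$, $Z$ contains no nonempty $G_{\delta}$-subset of $R^{\tau}$ (the easy direction of Proposition~\ref{P:recognize}, whose proof works verbatim for any $\tau>\omega$). Because $R^{\tau}$ is realcompact and $R^{\tau}\setminus Z$ is $G_{\delta}$-dense in it, $R^{\tau}$ is the Hewitt realcompactification $\upsilon(R^{\tau}\setminus Z)$; likewise for $F$. Any homeomorphism $g\colon R^{\tau}\setminus Z\to R^{\tau}\setminus F$ therefore extends uniquely to a homeomorphism $\widetilde{g}\colon R^{\tau}\to R^{\tau}$, and $\widetilde{g}|Z\colon Z\to F$ is the required homeomorphism. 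The phenomenon is exactly that for $\tau>\omega$ points of $R^{\tau}$ are not $G_{\delta}$, so the complement still ``remembers'' the ambient $R^{\tau}$ as its realcompactification---something that fails in $R^{\omega}$ and explains why your reduction cannot succeed.
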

\begin{proof}
(i) $\Longrightarrow$ (ii) follows from Corollary \ref{C:1}. In order to prove (ii) $\Longrightarrow$ (i) it suffices to note that neither $Z$ nor $F$ contains $G_{\delta}$-subsets of $R^{\tau}$ and consequently $R^{\tau}$ is the Hewitt realcompactification of both $R^{\tau}\setminus Z$ and $R^{\tau}\setminus F$. Thus any homeomorphism $h \colon R^{\tau}\setminus F \to R^{\tau}\setminus F$ can be uniquely extended to a homeomorphism $\widetilde{h} \colon R^{\tau} \to R^{\tau}$. Then $\widetilde{h}(Z) = F$ and $\widetilde{h}|Z \colon Z \to F$ is the required homeomorphism.
\end{proof}


\subsection{Compact case}\label{SS:compact}
Precisely same proof (even its simplified version) with just one adjustment -- replacing Theorem \ref{T:fz-setunknottingR} by Theorem \ref{T:fz-setunknottingI} in the proof of Lemma \ref{L:3} -- is needed for obtaining $Z_{\tau}$-set unknotting results in the compact case.

\begin{thm}\label{T:ztauunknottingsigmafI}
Let $\tau \geq \omega$ and $Z$ and $F$ be fibered $Z_{\tau}$-sets of $I^{\tau} \times I^{\tau}$ with respect to the projection $\pi_{1} \colon I^{\tau} \times I^{\tau} \to I^{\tau}$. Let also $h \colon Z \to F$ be a homeomorphism such that $\pi_{1}h = \pi_{1}|Z$. Then there exists a homeomorphism $H \colon I^{\tau}\times I^{\tau} \to I^{\tau}\times I^{\tau}$ such that $H|Z = h$ and $\pi_{1}H = \pi_{1}$.
\end{thm}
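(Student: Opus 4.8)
The plan is to repeat the proof of Theorem~\ref{T:ztauunknottingsigmaf} essentially verbatim, substituting the Tychonov cube $I$ for the real line $R$ throughout, with the single essential replacement flagged before the statement. First I would dispose of the base case: for $\tau=\omega$ the assertion is exactly Toru\'nczyk's Hilbert-cube statement, Theorem~\ref{T:fz-setunknottingI}, so I may assume $\tau>\omega$ and fix $A$ with $|A|=\tau$. Observe next that, since $I^{\tau}$ is compact Hausdorff hence normal and every closed $Z\subseteq I^{\tau}$ is compact (so every $f\in C(Z)$ is bounded and extends by Tietze), $Z$ is automatically $C$-embedded; this is why no $C$-embeddedness hypothesis appears in the statement, and it is part of the reason the simplified version of the argument suffices. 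Likewise, by the final remark after Definition~\ref{D:fzset}, for the compact spaces at hand functional separation of the images $\operatorname{Im}(\varphi)$, $\operatorname{Im}(\psi)$ from the relevant projections of $Z$ and $F$ reduces to disjointness, which streamlines the verifications.

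I would then produce the fiber-preserving extensions that drive the induction. Because $I^{A}$ is an absolute retract, the projection $\pi_{1}^{A}\colon I^{A}\times I^{A}\to I^{A}$ is soft; combined with $\pi_{1}^{A}h=\pi_{1}^{A}|Z$ and the automatic $C$-embeddedness of $Z$, softness (as in \cite[Proposition 6.1.16]{chibook}) yields $f\colon I^{A}\times I^{A}\to I^{A}\times I^{A}$ with $\pi_{1}^{A}f=\pi_{1}^{A}$ and $f|Z=h$, and symmetrically a fiber-preserving $g$ with $g|F=h^{-1}$. From here Lemmas~\ref{L:1} and~\ref{L:2} apply unchanged: the Spectral Theorem produces a cofinal $\omega$-closed family $\mathcal{M}_{f,g}\subseteq\exp_{\omega}A$ of countable index sets through which $f$ and $g$ factor, and the factored maps $f_{B\cup C}$, $g_{B\cup C}$ inherit all the required algebraic relations. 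Nothing in these two lemmas uses any property of $R$ beyond its being a separable completely metrizable factor, so $I$ serves equally well.

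The heart of the transfer is Lemma~\ref{L:3}. Its proof invokes Proposition~\ref{P:zsetsection} and Theorem~\ref{T:fz-setunknottingR}. The former is stated for arbitrary inverse sequences of Polish spaces with soft projections, and therefore applies \emph{directly} to the Hilbert-cube fibration: the relevant countable subproducts $I^{\,\cdot}$ are compact metric, hence Polish, with soft bonding projections. Thus the only change needed is to replace the appeal to Theorem~\ref{T:fz-setunknottingR} by Theorem~\ref{T:fz-setunknottingI}: the disjointness conditions (11)--(12) exhibit $\operatorname{cl}(\pi_{B_{0}\cup(C\setminus B)}^{B\cup C}(Z))$ and $\operatorname{cl}(\pi_{B_{0}\cup(C\setminus B)}^{B\cup C}(F))$ as fibered $Z$-sets over $I^{B_{0}}$, and Theorem~\ref{T:fz-setunknottingI} unknots the induced fiber-preserving homeomorphism to an autohomeomorphism $\widetilde{H}$ of $I^{B_{0}\cup(C\setminus B)}$. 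The pullback-square step producing $H$ from $\widetilde{H}$ is formal and unchanged. Granting Lemma~\ref{L:3} in this form, Lemmas~\ref{L:limitfiberwise} and~\ref{L:limitfiberwise1} are purely diagrammatic manipulations of projections and sections and transfer verbatim.

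Finally I would assemble the global homeomorphism exactly as before: well-order $A$, build a countable $A_{0}\in\mathcal{M}_{f,g}$ together with $H_{0}$ via Lemma~\ref{L:limitfiberwise}, run the transfinite induction on $\alpha<\tau$ handling successors with Lemma~\ref{L:limitfiberwise1} and limits by inverse limits of the coherent system $\{H_{\beta}\}$, and set $H=\lim\{H_{\alpha}\colon\alpha<\tau\}$. I do not anticipate a genuine obstacle: the only point demanding care is confirming that Proposition~\ref{P:zsetsection} and hence Lemma~\ref{L:3} survive the passage from $R$ to $I$ — that is, that the Hilbert-cube unknotting Theorem~\ref{T:fz-setunknottingI} anchors the countable-stage induction in place of its Euclidean counterpart — and this is precisely the single adjustment already identified.
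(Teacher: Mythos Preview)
Your proposal is correct and matches the paper's approach exactly: the paper's entire proof of Theorem~\ref{T:ztauunknottingsigmafI} is the single sentence preceding it, asserting that the proof of Theorem~\ref{T:ztauunknottingsigmaf} goes through verbatim with the sole adjustment of replacing Theorem~\ref{T:fz-setunknottingR} by Theorem~\ref{T:fz-setunknottingI} in Lemma~\ref{L:3}. Your additional remarks---that $C$-embeddedness is automatic by compactness, that functional separation reduces to disjointness (per the remark after Definition~\ref{D:fzset}), and that Proposition~\ref{P:zsetsection} already covers the compact countable subproducts---correctly identify the simplifications the paper alludes to but leaves implicit.
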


A version of the following statement appears in \cite{M}
\begin{cor}\label{C:1I}
Let $\tau > \omega$ and $h \colon Z \to F$ be a homeomorphism between $Z_{\tau}$-sets in $I^{\tau}$. Then there is an autohomeomorphism $H$ of $I^{\tau}$ such that $H|Z = h$.
\end{cor}


\section{Recognizing $Z_{\tau}$-sets}\label{S:recognizing}
Problem of recognizing $Z_{\tau}$-sets is an important one if one wishes to use unknotting theorems. It is relatively straightforward to detect whether $\operatorname{id}_{R^{\tau}}$ can be approximated by self maps images of which miss a given set $Z$. It is somewhat harder to find approximating maps whose images are functionally separated from $Z$. Of course, in case of the Tychonov cube the latter problem is redundant. We start by addressing the former.

\begin{lem}\label{L:disjointsection}
Let $|A| = \tau > \omega$ and $Z$ be a closed subset of $R^{A}$ containing no closed $G_{\kappa}$-subsets of $R^{A}$ for any $\kappa < \tau$. Then for each $B \subseteq A$ with $|B| < \tau$ there is a section $i_{B}^{A} \colon R^{B} \to R^{A}$ of the projection $\pi_{B}^{A} \colon R^{A} \to R^{B}$ such that $Z \cap i_{B}^{A}(R^{B}) = \emptyset$. 
\end{lem}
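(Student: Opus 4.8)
The plan is to construct the section $i_B^A$ by specifying its values on the "new" coordinates $A\setminus B$ so as to land in the complement of $Z$, exploiting the fact that $Z$ is too "thin" (contains no closed $G_\kappa$-subset for $\kappa<\tau$) to block all sections. A section of $\pi_B^A$ amounts to choosing, for each $x\in R^B$, a point $i_B^A(x)\in R^A$ with $\pi_B^A(i_B^A(x))=x$; equivalently, it is determined by a map $R^B\to R^{A\setminus B}$ giving the values on the remaining coordinates. Since $|A\setminus B|=\tau$ (as $|B|<\tau=|A|$), I have a full product $R^{A\setminus B}$ worth of freedom to place the image away from $Z$.

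\emph{First} I would reduce to a spectral/cofinality argument. Consider the collection $\mathcal N$ of sets $C\in\exp_\omega A$ for which the pair $(Z, \text{the constraint imposed by }B)$ is "resolved" at level $C$ in the sense of the Spectral Theorem; more precisely, I want to build an increasing $\tau$-chain of intermediate coordinate sets $B=B_0\subseteq B_1\subseteq\cdots$ (with each $|B_\xi|<\tau$) and define the section coordinate-by-coordinate so that at each stage the partially-specified image already misses $\pi_{B_\xi}^A(Z)$. The hypothesis that $Z$ contains no closed $G_\kappa$-subset for $\kappa<\tau$ is exactly what guarantees, at each stage $\xi$ with $|B_\xi|<\tau$, that $\operatorname{cl}(\pi_{B_\xi}^A(Z))\neq R^{B_\xi}$ — indeed, if the closure of the projection were all of $R^{B_\xi}$ one could pull back a point to produce a closed $G_{|B_\xi|}$-subset of $R^A$ inside $Z$, contradicting the hypothesis. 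Thus at each stage there is a point $w_\xi\in R^{B_{\xi+1}\setminus B_\xi}$ (even a whole section) avoiding the projected image, and these choices can be assembled coherently.

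\emph{Concretely}, I would argue as follows. For a single new coordinate one picks a value not in the (non-dense) projection; iterating transfinitely along a well-ordering $A\setminus B=\{a_\eta:\eta<\tau\}$, I define $i_B^A$ one coordinate at a time, at each ordinal $\eta$ choosing the $a_\eta$-coordinate of the image to avoid the relevant projected obstruction $\pi_{B\cup\{a_\gamma:\gamma<\eta\}}^A(Z)$. The key point making the limit land \emph{outside} $Z$ rather than merely having each finite approximation miss it is the $G_\kappa$-hypothesis: a point of $R^A$ lies in the closed set $Z$ iff it lies in $\pi_{B_\xi}^A{}^{-1}(\operatorname{cl}\,\pi_{B_\xi}^A(Z))$ for all $\xi$, and by construction the image point is excluded at some stage $\xi$ of size $<\tau$. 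I would verify continuity of $i_B^A$ by checking compatibility of the coordinate choices (taking them constant in $x$, i.e. a \emph{constant} section onto a well-chosen point of $R^{A\setminus B}$, is the cleanest option — then $i_B^A(x)=(x,c)$ for a fixed $c\in R^{A\setminus B}$ with $(R^B\times\{c\})\cap Z=\emptyset$, which reduces everything to finding a single such $c$).

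\emph{The main obstacle} is establishing that the obstruction $\operatorname{cl}(\pi_{B_\xi}^A(Z))$ is a proper subset of $R^{B_\xi}$ at each relevant stage and that the avoidance survives the transfinite limit — that is, converting the local "not dense" conclusion into a global "the limit point is not in $Z$." This is precisely where the hypothesis "no closed $G_\kappa$-subset for $\kappa<\tau$" does the work, via the Spectral Theorem and the characterization of $C$-embedded closed subsets of $R^\tau$ as intersections of preimages of their projections to subproducts of size $<\tau$. Once one knows $Z=\bigcap\{(\pi_S^A)^{-1}(\operatorname{cl}\,\pi_S^A(Z)):S\in\exp_{<\tau}A\}$ and that each $\operatorname{cl}\,\pi_S^A(Z)$ is nowhere dense (or at least proper) for the $S$ we encounter, choosing $c$ with $\pi_S^A(\text{anything with tail }c)\notin\operatorname{cl}\,\pi_S^A(Z)$ for some single $S\supseteq B$ of size $<\tau$ finishes the argument.
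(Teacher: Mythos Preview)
Your proposal contains a genuine gap at its central step. You claim that the hypothesis ``$Z$ contains no closed $G_\kappa$-subset for $\kappa<\tau$'' forces $\operatorname{cl}(\pi_{B_\xi}^A(Z))\neq R^{B_\xi}$, arguing that otherwise one could ``pull back a point to produce a closed $G_{|B_\xi|}$-subset of $R^A$ inside $Z$.'' This inference is incorrect: density of the projection means every fiber $(\pi_{B_\xi}^A)^{-1}(x)$ \emph{meets} $Z$, not that some fiber is \emph{contained} in $Z$. For a concrete obstruction, take $\tau=\omega_1$, $B=\omega$, and let $Z=\{(x,\sigma(x)):x\in R^\omega\}\subset R^{\omega_1}$ be the graph of any continuous injection $\sigma\colon R^\omega\to R^{\omega_1\setminus\omega}$. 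Then $\pi_B^A(Z)=R^\omega$ exactly, yet $Z$ contains no fiber $(\pi_C^A)^{-1}(y)$ for countable $C$ (any such fiber has uncountably many free coordinates in $\omega_1\setminus\omega$, while membership in $Z$ pins them all down). The same example kills the ``constant section'' shortcut and the ``single $S\supseteq B$'' variant you suggest at the end: both ultimately require some projection of $Z$ to be non-surjective, which the hypothesis simply does not provide.

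What the hypothesis \emph{does} give is the pointwise statement that for every $x\in R^{B'}$ with $|B'|<\tau$ the fiber $(\pi_{B'}^A)^{-1}(x)$ is not contained in $Z$. The paper exploits exactly this. Starting from $B_0=B$, it builds a transfinite chain $B_0\subseteq B_1\subseteq\cdots$ (each step adding only countably many coordinates) together with compatible sections $i_0^\lambda\colon R^{B_0}\to R^{B_\lambda}$, maintaining an increasing family of open sets $V_\lambda=\{x\in R^{B_0}: i_0^\lambda(x)\notin\operatorname{cl}\pi_{B_\lambda}^A(Z)\}$. At each successor stage one picks a single point $z\in R^{B_0}\setminus V_\mu$, uses the fiber-not-in-$Z$ fact at $i_0^\mu(z)$ to find a slightly larger $B_{\mu+1}$ and a section extension sending $z$ into the complement of $\operatorname{cl}\pi_{B_{\mu+1}}^A(Z)$, thereby enlarging $V_\mu$ strictly. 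Since the $V_\lambda$ form a strictly increasing chain of open subsets of $R^{B_0}$, which has weight at most $\kappa=\max\{|B|,\omega\}$, the chain stabilizes before $\kappa^+$, and stabilization can only occur at $V_{\lambda_0}=R^{B_0}$. Composing $i_0^{\lambda_0}$ with any section $R^{B_{\lambda_0}}\to R^A$ then yields the desired $i_B^A$. The essential idea you are missing is this point-by-point enlargement coupled with the weight bound forcing termination; no global non-density of projections is available or needed.
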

\begin{proof}
Let $\kappa = \max\{ |B|, \omega\}$, $B_{0} = B$ and $x_{0} \in R^{B _{0}}$. By assumption, $\left(\pi_{B_{0}}^{A}\right)^{-1}(x_{0}) \setminus Z \neq \emptyset$. Choose a subset $B_{1} \subseteq A$ such that $B_{0} \subseteq B_{1}$, $|B_{1}\setminus B_{0}| \leq \omega$ and $\left(\pi_{B_{0}}^{B_{1}}\right)^{-1}(x_{0}) \setminus \operatorname{cl}_{R^{B_{1}}}(\pi_{B_{1}}^{A}(Z)) \neq \emptyset$. Let $x_{1} \in \left(\pi_{B_{0}}^{B_{1}}\right)^{-1}(x_{0}) \setminus \operatorname{cl}_{R^{B_{1}}}(\pi_{B_{1}}^{A}(Z))$. Choose a section $i_{0}^{1} \colon R^{B_{0}} \to R^{B_{1}}$ of the projection $\pi_{B_{0}}^{B_{1}} \colon R^{B_{1}} \to R^{B_{0}}$ such that $i_{0}^{1}(x_{0}) = x_{1}$. Let 

\[ V_{1} = \{ x \in R^{B_{0}}\colon i_{0}^{1}(x) \notin \operatorname{cl}_{R^{B_{1}}}(\pi_{B_{1}}^{A}(Z))\}.\]

\noindent Note that $x_{0} \in V_{1}$ and consequently $V_{1}$ is a non-empty open subset of $R^{B_{0}}$.

Let $\gamma < \kappa^{+}$. Suppose that for each $\lambda$, $1 \leq \lambda < \gamma$, we have already constructed a subset $B_{\lambda} \subseteq A$, an open subset $V_{\lambda} \subseteq R^{B_{0}}$ and a section $i_{0}^{\lambda} \colon R^{B_{0}} \to R^{B_{\lambda}}$ of the projection $\pi_{B_{0}}^{B_{\lambda}} \colon R^{B_{\lambda}} \to R^{B_{0}}$, satisfying the following conditions:
\begin{itemize}
\item[(i)]
$B_{\lambda} < B_{\mu}$, whenever $\lambda < \mu < \gamma$,
\item[(ii)]
$B_{\mu} = \cup\{ B_{\lambda} \colon \lambda < \mu\}$, whenever $\mu < \gamma$ is a limit ordinal,
\item[(iii)]
$V_{\lambda} \subseteq V_{\mu}$, whenever $\lambda < \mu < \gamma$,
\item[(iv)]
$V_{\mu} = \cup\{ V_{\lambda} \colon \lambda < \mu\}$, whenever $\mu < \gamma$ is a limit ordinal,
\item[(v)]
$i_{0}^{\mu} = \lim\{ i_{0}^{\lambda} \colon \lambda < \mu\}$, whenever $\mu < \gamma$ is a limit ordinal,
\item[(vi)]
$i_{0}^{\lambda} = \pi_{B_{\lambda}}^{B_{\mu}}i_{0}^{\mu}$, whenever $\lambda < \mu < \gamma$,
\item[(vii)]
$V_{\lambda} = \{ x \in R^{B_{0}}\colon i_{0}^{\lambda}(x) \notin \operatorname{cl}_{R^{B_{\lambda}}}(\pi_{B_{\lambda}}(Z))\}$
\end{itemize}
We shall construct a set $B_{\gamma}$, an open subset $V_{\gamma} \subseteq R^{B_{0}}$ and a section $i_{0}^{\gamma} \colon R^{B_{0}} \to R^{B_{\gamma}}$ of the projection $\pi^{B_{\gamma}}_{B_{0}} \colon R^{B_{\gamma}} \to R^{B_{0}}$.

Suppose that $\gamma$ is a limit ordinal. Then we let $ B_{\gamma} = \cup\{ B_{\mu} \colon \mu < \gamma\}$ and 
\[ i_{0}^{\gamma} = \lim\{ i_{0}^{\mu} \colon \mu < \gamma\} \colon R^{B_{0}}\to R^{B_{\gamma}}\]

\noindent is well defined and satisfies corresponding conditions (v) and (vi). Let 
\[ V_{\gamma} = \{ x \in X_{\alpha_{0}}\colon i_{0}^{\gamma}(x) \notin p_{\alpha_{\gamma}}(F)\} .\]

\noindent Note that $V_{\gamma} = \cup\{ V_{\alpha_{\mu}} \colon \mu < \gamma\}$.

Next consider the case $\gamma = \mu +1$. Suppose that $V_{\mu} \neq R{B_{0}}$ and let
\[ x_{\mu} = i_{0}^{\mu}(z) \in i_{0}^{\mu}(R^{B_{0}}) \subseteq R^{B_{\mu}},\]

\noindent where $z \in R^{B_{0}}\setminus V_{\mu}$. By assumption, $\left(\pi_{B_{\mu}}^{A}\right)^{-1}(x_{\mu}) \setminus Z \neq \emptyset$ (note that $w(X_{\alpha_{\mu}}) \leq \kappa$). Choose a set $B_{\gamma} \subseteq A$ so that $B_{\gamma} \supseteq B_{\mu}$, $|B_{\gamma}\setminus B_{\mu}| \leq \omega$ and $\left(\pi_{B_{\mu}}^{B_{\gamma}}\right)^{-1}(x_{\mu}) \setminus \operatorname{cl}_{R^{B_{\gamma}}}(\pi_{B_{\gamma}}(Z)) \neq \emptyset$.

Take any section $i_{\mu}^{\gamma} \colon R^{B_{\mu}} \to R^{B_{\gamma}}$ of the projection $\pi_{B_{\mu}}^{B_{\gamma}} \colon R^{B_{\gamma}} \to R^{B_{\mu}}$ such that $i_{\mu}^{\gamma}(x_{\mu}) = z^{\prime}$, where $z^{\prime} \in \left(\pi_{B_{\mu}}^{B_{\gamma}}\right)^{-1}(x_{\mu}) \setminus \operatorname{cl}_{R^{B_{\gamma}}}(\pi_{B_{\gamma}}(Z))$. Let $i_{0}^{\gamma} = i_{\mu}^{\gamma}i_{0}^{\mu}$ and $V_{\gamma} = \{ x \in R^{B_{0}}\colon i_{0}^{\gamma}(x) \notin \operatorname{cl}_{R^{B_{\gamma}}}(\pi_{B_{\gamma}}(Z))\}$. Note that $V_{\mu} \subseteq V_{\gamma}$ and $z \in V_{\gamma}\setminus V_{\mu}$. This completes construction of the needed objects in the case $\gamma = \mu +1$.

Thus the construction can be carried out for each $\lambda < \kappa^{+}$ and we obtain an increasing collection $\{ V_{\lambda} \colon \lambda < \kappa^{+}\}$ of length $\kappa^{+}$ of open subsets of $R^{B_{0}}$. Since $|B_{0}| \leq \kappa$, this collection must stabilize, which means that there in an index $\lambda_{0} < \kappa^{+}$ such that $V_{\lambda} = V_{\lambda_{0}}$ for any $\lambda \geq \lambda_{0}$.
By construction, this is only possible if $V_{\lambda_{0}} = R^{B_{0}}$. Let $i_{B}^{A} = i_{B_{\lambda_{0}}}^{A}i_{B_{0}}^{B_{\lambda_{0}}}$, where $i_{B_{\lambda_{0}}}^{A} \colon R^{B_{\lambda_{0}}} \to R^{A}$ is any section of the projection $\pi_{B_{\lambda_{0}}}^{A} \colon R^{A} \to R^{B_{\lambda_{0}}}$. Clearly $i_{B}^{A}(R^{B}) \cap Z = \emptyset$.
\end{proof}

It turns out that recognizing $Z_{\omega_{1}}$-sets in $R^{\omega_{1}}$ is very simple.

\begin{pro}\label{P:recognize}
The following conditions are equivalent for a closed subset $Z$ of $R^{\omega_{1}}$:
\begin{itemize}
  \item[(i)]
 $Z$ is a $Z_{\omega_{1}}$-set;
  \item[(ii)] 
$Z$ does not contain $G_{\delta}$-subsets of $R^{\omega_{1}}$.  
\end{itemize}
\end{pro}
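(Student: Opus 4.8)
The plan is to prove the two implications separately, exploiting throughout the special feature of the cardinal $\omega_{1}$: writing $R^{\omega_{1}}=R^{A}$ with $|A|=\omega_{1}$, a basic neighborhood of a map in $C_{\omega_{1}}(R^{A},R^{A})$ is, by Lemma~\ref{L:description}, determined by fixing the composition with a projection $\pi_{S}^{A}$ onto a \emph{countable} subproduct $R^{S}$. Correspondingly, a nonempty $G_{\delta}$-subset of $R^{A}$ is countably supported near each of its points: if $x\in G=\bigcap_{n}U_{n}$ with each $U_{n}$ open, I would choose for each $n$ a basic neighborhood $x\in B_{n}\subseteq U_{n}$ depending on a finite coordinate set $F_{n}$, and put $C=\bigcup_{n}F_{n}$; then $C$ is countable and $G'=\bigcap_{n}B_{n}$ is a $\pi_{C}^{A}$-saturated $G_{\delta}$-set with $x\in G'\subseteq G$. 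Since projections are open, $\operatorname{cl}_{R^{A}}(G')=(\pi_{C}^{A})^{-1}(W)$ where $W=\operatorname{cl}_{R^{C}}(\pi_{C}^{A}(G'))$ is nonempty and closed. This observation makes Lemma~\ref{L:disjointsection} applicable, its hypothesis for $\tau=\omega_{1}$ being exactly the absence of closed $G_{\delta}$-subsets.

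For (i)$\Rightarrow$(ii) I would argue by contraposition. Assuming $Z$ contains a nonempty $G_{\delta}$-subset, the preceding paragraph (together with closedness of $Z$) shows that $Z$ contains a cylinder $(\pi_{C}^{A})^{-1}(W)$ over a nonempty closed $W\subseteq R^{C}$ with $C$ countable. I claim $\operatorname{id}_{R^{A}}$ witnesses the failure of density in Definition~\ref{D:zset}: consider the basic neighborhood $B(\operatorname{id},C)=\{g : \pi_{C}^{A}g=\pi_{C}^{A}\}$. For any $g$ in it and any $x\in(\pi_{C}^{A})^{-1}(W)$ one has $\pi_{C}^{A}(g(x))=\pi_{C}^{A}(x)\in W$, so $g(x)\in(\pi_{C}^{A})^{-1}(W)\subseteq Z$; hence $g(R^{A})\cap Z\neq\emptyset$ and $g(R^{A})$ cannot be functionally separated from $Z$. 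Thus no map in $B(\operatorname{id},C)$ has image functionally separated from $Z$, so $Z$ is not a $Z_{\omega_{1}}$-set.

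For (ii)$\Rightarrow$(i) I would verify density directly. Fix $f\in C_{\omega_{1}}(R^{A},R^{A})$ and a basic neighborhood $B(f,S)$ with $S$ countable (Lemma~\ref{L:description}). Applying Lemma~\ref{L:disjointsection} with $B=S$ produces a section $i_{S}^{A}\colon R^{S}\to R^{A}$ of $\pi_{S}^{A}$ whose image misses $Z$, and I would set $g=i_{S}^{A}\pi_{S}^{A}f$. Since $\pi_{S}^{A}i_{S}^{A}=\operatorname{id}_{R^{S}}$, we get $\pi_{S}^{A}g=\pi_{S}^{A}f$, so $g\in B(f,S)$; moreover $g(R^{A})\subseteq i_{S}^{A}(R^{S})$. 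It then remains to see that $i_{S}^{A}(R^{S})$ is functionally separated from $Z$, whence so is $g(R^{A})$, giving density.

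The main obstacle is precisely this last point: Lemma~\ref{L:disjointsection} as stated yields only $i_{S}^{A}(R^{S})\cap Z=\emptyset$, whereas Definition~\ref{D:zset} demands functional separation, and $R^{A}$ is not normal. I would resolve it by reading off from the construction in Lemma~\ref{L:disjointsection} that the separation already occurs at a countable stage, namely $i_{S}^{A}(R^{S})\subseteq(\pi_{B'}^{A})^{-1}(R^{B'}\setminus\operatorname{cl}_{R^{B'}}(\pi_{B'}^{A}(Z)))$ for some countable $B'\supseteq S$. In the separable metrizable space $R^{B'}$ the disjoint closed sets $\pi_{B'}^{A}(i_{S}^{A}(R^{S}))$ and $\operatorname{cl}_{R^{B'}}(\pi_{B'}^{A}(Z))$ admit a Urysohn function $u\colon R^{B'}\to[0,1]$, and $u\circ\pi_{B'}^{A}$ then functionally separates $i_{S}^{A}(R^{S})$ from $Z$ in $R^{A}$. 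This reduction of the separation to a countable, hence metrizable, subproduct is the technical heart of the argument; the remaining verifications are routine bookkeeping.
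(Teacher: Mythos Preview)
Your proposal is correct, and the overall architecture matches the paper's: (i)$\Rightarrow$(ii) by trapping a countably supported cylinder inside $Z$ and showing no map in the corresponding basic neighborhood of the identity can miss $Z$; (ii)$\Rightarrow$(i) by producing, for each countable $S$, a section $i_{S}^{A}$ of $\pi_{S}^{A}$ with image disjoint from $Z$ via Lemma~\ref{L:disjointsection}, and then upgrading disjointness to functional separation.

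The genuine difference is in that last upgrade. The paper argues externally: $i_{S}^{A}(R^{S})$ is Lindel\"{o}f (being a copy of the separable metrizable $R^{S}$), so it is covered by a countable union $U$ of basic cozero sets missing $Z$; since $i_{S}^{A}(R^{S})$ is a retract of $R^{A}$ (via $i_{S}^{A}\pi_{S}^{A}$) it is $C$-embedded, hence completely separated from the zero set $R^{A}\setminus U\supseteq Z$. This uses only the \emph{statement} of Lemma~\ref{L:disjointsection}. You instead open up the proof of that lemma to extract a countable intermediate level $B'\supseteq S$ at which the section already avoids $\operatorname{cl}_{R^{B'}}(\pi_{B'}^{A}(Z))$, and then invoke normality of the metrizable $R^{B'}$. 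Both routes are valid; the paper's is cleaner as a black-box application, while yours makes the countable support of the separation explicit (and incidentally your density verification, treating an arbitrary $f$ rather than only $\operatorname{id}$, is more fully spelled out than the paper's).
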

\begin{proof}
(i) $\Longrightarrow$ (ii). Assuming that $Z$ does contain a $G_{\delta}$-subset of $R^{\omega_{1}}$, we can find a countable subset $B \subseteq \omega_{1}$ and a point $x_{0} \in R^{B}$ such that $\pi_{B}^{-1}(x_{0}) \subseteq Z$. On the other hand, by (i), there is a map $f \colon R^{\omega_{1}} \to R^{\omega_{1}}$ such that $\pi_{B}f = \pi_{B}$ and $Z \cap f(R^{\omega_{1}}) = \emptyset$. Take a point $y_{0} \in R^{\omega_{1}}$ such that $\pi_{B}(y_{0}) = x_{0}$. Clearly $f(y_{0}) \in \pi_{B}^{-1}(x_{0})$ and consequently $f(y_{0}) \in Z$. But this is impossible since $Z \cap f(R^{\omega_{1}}) = \emptyset$.

(ii) $\Longrightarrow$ (i). For an arbitrary countable subset $B \subseteq \omega_{1}$, we can find, using Lemma \ref{L:disjointsection}, a section $i_{B} \colon R^{B} \to R^{\omega_{1}}$ of the projection $\pi_{B} \colon R^{\omega_{1}} \to R^{B}$ such that $Z \cap i_{B}(R^{B}) = \emptyset$. Since $i_{B}(R^{B})$ is Lindel\"{o}f there is a functionally open subset $U$ of $R^{\omega_{1}}$ such that $i_{B}(R^{B}) \subseteq U$ and $U \cap Z = \emptyset$. Since $i_{B}(R^{B})$ is $C$-embedded in $R^{\omega_{1}}$ it follows that $i_{B}(R^{B})$ is functionally separated from $R^{\omega_{1}}\setminus U$. Then the map $f = i_{B}\pi_{B}$ witnesses the fact that $Z$ is a $Z_{\tau}$-set.
\end{proof} 

In the compact case we have the similar equivalence for any weight.
\begin{pro}\label{P:recognizeI}
Let $\tau > \omega$. The following conditions are equivalent for a closed subset $Z$ of $I^{\tau}$:
\begin{itemize}
  \item[(i)]
 $Z$ is a $Z_{\tau}$-set;
  \item[(ii)] 
$Z$ does not contain $G_{\kappa}$-subsets of $R^{\tau}$ for any $\kappa < \tau$.  
\end{itemize}
\end{pro}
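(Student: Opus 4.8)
The plan is to follow the template of Proposition \ref{P:recognize} almost verbatim, replacing $R^{\omega_1}$ by $I^\tau$ and the threshold $G_\delta$ by ``closed $G_\kappa$-subset for some $\kappa<\tau$'', and to invoke a compact counterpart of Lemma \ref{L:disjointsection}. Throughout I use the remark following Definition \ref{D:fzset}: since $I^\tau$ is compact, $Z$ is a $Z_\tau$-set precisely when the set $\{f\in C_\tau(I^\tau,I^\tau)\colon f(I^\tau)\cap Z=\emptyset\}$ is dense, and by Lemma \ref{L:description} a basic neighborhood of $\operatorname{id}_{I^\tau}$ has the form $\{g\colon \pi_B g=\pi_B\}$ with $|B|<\tau$, where $\pi_B\colon I^\tau\to I^B$ is the projection and $|A|=\tau$.

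For the implication (i) $\Longrightarrow$ (ii) I argue contrapositively, exactly as in the $\omega_1$ case. Suppose $Z$ contains a nonempty closed $G_\kappa$-subset $G$ with $\kappa<\tau$. A closed $G_\kappa$-subset of $I^\tau$ is the full preimage of its projection onto some coordinate set $B$ with $|B|\le\kappa$: being functionally $G_\kappa$ it is an intersection of $\kappa$ cozero sets, each depending on countably many coordinates, so $G=\pi_B^{-1}(\pi_B(G))$. Choosing $x_0\in\pi_B(G)$ gives $\pi_B^{-1}(x_0)\subseteq G\subseteq Z$. By (i) the basic neighborhood $\{g\colon\pi_B g=\pi_B\}$ of $\operatorname{id}$ contains a map $f$ with $f(I^\tau)\cap Z=\emptyset$; picking $y_0$ with $\pi_B(y_0)=x_0$ yields $\pi_B(f(y_0))=x_0$, hence $f(y_0)\in\pi_B^{-1}(x_0)\subseteq Z$, a contradiction.

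For (ii) $\Longrightarrow$ (i) the key step is the compact counterpart of Lemma \ref{L:disjointsection}: if $Z$ is closed in $I^\tau$ and contains no closed $G_\kappa$-subset for any $\kappa<\tau$, then every projection $\pi_B\colon I^\tau\to I^B$ with $|B|<\tau$ admits a section $i_B\colon I^B\to I^\tau$ with $i_B(I^B)\cap Z=\emptyset$. The proof of Lemma \ref{L:disjointsection} transfers word for word: its only inputs are nonemptiness of fibers, existence of sections of the bonding projections of the Tychonov cube, and the observation that each fiber $\pi_{B_\mu}^{-1}(x_\mu)$ is a closed $G_{|B_\mu|}$-set with $|B_\mu|\le\kappa<\tau$, so by hypothesis it is not contained in $Z$ and the partial section can be pushed off $Z$; the same $\kappa^+$-length stabilization argument then forces $V_{\lambda_0}=I^{B_0}$. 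Granting this, set $f=i_B\pi_B$. Then $\pi_B f=\pi_B$, so $f$ lies in the chosen basic neighborhood of $\operatorname{id}$, while $f(I^\tau)=i_B(I^B)$ is disjoint from $Z$ (functional separation being automatic by compactness). Hence the maps missing $Z$ are dense in $C_\tau(I^\tau,I^\tau)$ and $Z$ is a $Z_\tau$-set.

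The main obstacle is bookkeeping rather than conceptual. I must confirm that the proof of Lemma \ref{L:disjointsection} is genuinely insensitive to replacing $R$ by $I$ (it is: it never uses non-compactness, and all fibers and sections exist verbatim in $I^\tau$), and that ``closed $G_\kappa$-subset for some $\kappa<\tau$'' is the correct threshold, so that (ii) supplies exactly the hypothesis of that transferred lemma. The one genuinely new lemma-level fact needed in the $(i)\Rightarrow(ii)$ direction is that a closed $G_\kappa$-subset of $I^\tau$ equals the preimage of its projection to $\le\kappa$ coordinates; this rests on the classical fact that a functionally open subset of a product of separable metrizable spaces depends on countably many coordinates.
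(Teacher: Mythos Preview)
Your proposal is correct and follows precisely the route the paper implicitly intends: the paper gives no separate proof of Proposition~\ref{P:recognizeI}, leaving the reader to transport the argument of Proposition~\ref{P:recognize} (and Lemma~\ref{L:disjointsection}) to $I^{\tau}$, which is exactly what you do. The only extra ingredient you supply---that a closed $G_{\kappa}$-subset of $I^{\tau}$ is cylindrical over $\le\max(\kappa,\omega)$ coordinates, using normality to pass to cozero sets---is the appropriate replacement for the single-fiber step in the $\omega_{1}$ case, and your observation that compactness makes the functional-separation half of the $Z_{\tau}$-set definition automatic is precisely the remark following Definition~\ref{D:fzset}.
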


\begin{pro}\label{P:lindelof}
Let $\tau > \omega$. Any closed $C$-embedded Lindel\"{o}f subset of $R^{\tau}$ is a $Z_{\tau}$-set. 
\end{pro}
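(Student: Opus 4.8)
The plan is to verify the density condition of Definition \ref{D:zset} directly. Write $|A| = \tau$ and identify $R^{\tau}$ with $R^{A} = \prod_{a \in A}R$. By Lemma \ref{L:description} (applied with the ambient product equal to $R^{A}$ itself), a basic neighborhood of an arbitrary map $h \in C_{\tau}(R^{A},R^{A})$ has the form $B(h,S) = \{g : \pi_{S}^{A}g = \pi_{S}^{A}h\}$ with $S \subseteq A$ and $|S| < \tau$. Thus it suffices to produce, for each such $h$ and $S$, a map $f \in B(h,S)$ whose image $f(R^{A})$ is functionally separated from $Z$.

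First I would record that the Lindel\"of property forces $Z$ to contain no nonempty closed $G_{\kappa}$-subset $P$ of $R^{A}$ for any $\kappa < \tau$. Indeed, writing $P = \bigcap_{\alpha < \kappa}U_{\alpha}$ and fixing $p \in P$, each $U_{\alpha}$ contains a basic open box around $p$ restricting only finitely many coordinates; collecting these finite index sets into a set $B \subseteq A$ with $|B| \leq \kappa\cdot\omega < \tau$ shows that $\left(\pi_{B}^{A}\right)^{-1}(\pi_{B}^{A}(p)) \subseteq P \subseteq Z$. This fiber is a closed copy of $R^{A \setminus B} \cong R^{\tau}$, which is not Lindel\"of, contradicting the fact that closed subspaces of the Lindel\"of space $Z$ are Lindel\"of. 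With this in hand, Lemma \ref{L:disjointsection} applies with $B = S$ and yields a section $i_{S}^{A} \colon R^{S} \to R^{A}$ of $\pi_{S}^{A}$ with $i_{S}^{A}(R^{S}) \cap Z = \emptyset$. I then set $f = i_{S}^{A}\pi_{S}^{A}h$. Since $\pi_{S}^{A}i_{S}^{A} = \operatorname{id}_{R^{S}}$ we get $\pi_{S}^{A}f = \pi_{S}^{A}h$, so $f \in B(h,S)$, while $f(R^{A}) \subseteq i_{S}^{A}(R^{S})$. The set $L = i_{S}^{A}(R^{S})$ is closed in $R^{A}$ (it is the equalizer of $i_{S}^{A}\pi_{S}^{A}$ and $\operatorname{id}_{R^{A}}$, two maps into a Hausdorff space) and disjoint from $Z$, so it only remains to separate $Z$ from $L$ functionally.

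This last step is the heart of the matter, and the point at which both remaining hypotheses on $Z$ are used: disjoint closed sets in the non-normal space $R^{A}$ need not be functionally separated, so mere disjointness of the images is not enough. I would resolve it exactly as in the proof of the implication (ii) $\Longrightarrow$ (i) of Proposition \ref{P:recognize}, with $Z$ now in the role of the Lindel\"of $C$-embedded set. For each $z \in Z$ the Tychonov property of $R^{A}$ provides a cozero neighborhood $U_{z}$ of $z$ with $U_{z} \cap L = \emptyset$; since $Z$ is Lindel\"of I extract a countable subcover and let $U = \bigcup_{n}U_{z_{n}}$, a cozero set with $Z \subseteq U$ and $U \cap L = \emptyset$. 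Finally, because $Z$ is $C$-embedded and contained in the cozero set $U = \{g > 0\}$ with $g \geq 0$, the function $1/(g|Z)$ extends continuously over $R^{A}$ to a map $G$, and $\min(\max(gG,0),1)$ equals $1$ on $Z$ and $0$ on $R^{A}\setminus U \supseteq L \supseteq f(R^{A})$; hence $f(R^{A})$ and $Z$ are functionally separated. Since $f$ lies in the prescribed neighborhood $B(h,S)$, the set of maps with image functionally separated from $Z$ is dense in $C_{\tau}(R^{A},R^{A})$, so $Z$ is a $Z_{\tau}$-set. The only delicate ingredients are the two structural facts just isolated — that Lindel\"ofness rules out closed $G_{\kappa}$-subsets (enabling Lemma \ref{L:disjointsection}) and that Lindel\"of plus $C$-embedded upgrades disjointness to functional separation — both of which are short once stated in this form.
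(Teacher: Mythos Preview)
Your proof is correct and follows essentially the same route as the paper's: show that Lindel\"ofness rules out closed $G_{\kappa}$-subsets, invoke Lemma~\ref{L:disjointsection} to obtain a section with image disjoint from $Z$, and then use Lindel\"of plus $C$-embedding to upgrade disjointness to functional separation. The only cosmetic difference is that you compose with the given map $h$ to verify density at every point of $C_{\tau}(R^{A},R^{A})$, whereas the paper writes $f = i_{B}^{A}\pi_{B}^{A}$ (approximating $\operatorname{id}_{R^{A}}$); since precomposing a map $g$ with $\pi_{B}^{A}g=\pi_{B}^{A}$ by $h$ lands in $B(h,B)$, the two formulations are equivalent.
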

\begin{proof}
Let $|A| = \tau$ and $Z$ be a closed $C$-embedded Lindel\"{o}f subset of $R^{\tau}$. Since any $G_{\kappa}$-subset of $R^{A}$ with $\kappa < \tau$ contains a closed copy of $R^{A}$ and since $R^{A}$ is not Lindel\"{o}f, it follows that $Z$ does not contain any $G_{\kappa}$-subset of $R^{A}$. For any $B \subseteq A$ with $|B| < \tau$ we can, according to Lemma \ref{L:disjointsection}, find a section $i_{B}^{A} \colon R^{B} \to R^{A}$ of the projection $\pi_{B}^{A} \colon R^{A} \to R^{B}$ such that $Z \cap i_{B}^{A}(R^{B}) = \emptyset$. Since $i_{B}^{A}(R^{B})$ is closed in $R^{A}$ and since $Z$ is Lindel\'{o}f, there is a functionally open subset $U$ of $R^{A}$ such that $Z \subseteq U$ and $U \cap i_{B}^{A}(R^{B})=\emptyset$. Since $Z$ is $C$-embedded it follows that $Z$ is functionally separated from $R^{A}\setminus U$. Consequently $Z$ and $i_{B}^{A}(R^{B})$ are also functionally separated in $R^{A}$. Thus the map $f \colon R^{A} \to R^{A}$, defined by letting $f = i_{B}^{A}\pi_{B}^{A}$, witnesses the fact that $Z$ is a $Z_{\tau}$-set.
\end{proof}

We conclude by positively answering a question from \cite{vds}. Proof of the following statement follows from Corollary \ref{C:1} and Proposition \ref{P:lindelof}.

\begin{cor}\label{C:2}
Let $\tau > \omega$. Any homeomorphism between closed $C$-embedded and Lindel\"{o}f subsets of $R^{\tau}$ can be extended to an autohomeomorphism of $R^{\tau}$.
\end{cor}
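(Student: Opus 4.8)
The plan is to obtain the corollary as an immediate consequence of the two principal results already at our disposal: the $Z_{\tau}$-set unknotting Corollary \ref{C:1} and the recognition Proposition \ref{P:lindelof}. Fix $\tau > \omega$ and let $h \colon Z \to F$ be a homeomorphism, where $Z, F \subseteq R^{\tau}$ are closed, $C$-embedded, and Lindel\"{o}f.

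The first step is to upgrade the Lindel\"{o}f hypothesis into the $Z_{\tau}$-set property. This is precisely what Proposition \ref{P:lindelof} supplies: every closed $C$-embedded Lindel\"{o}f subset of $R^{\tau}$ is a $Z_{\tau}$-set. Applying it to $Z$ and to $F$ separately shows that both are $C$-embedded $Z_{\tau}$-sets of $R^{\tau}$. The second step is then to invoke Corollary \ref{C:1} directly: since $h \colon Z \to F$ is a homeomorphism between $C$-embedded $Z_{\tau}$-sets, it extends to an autohomeomorphism $H$ of $R^{\tau}$ with $H|Z = h$, which is the required extension.

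There is no genuine obstacle at this stage, since all of the difficulty has been front-loaded into the unknotting machinery of Section \ref{S:ztauunknotting} and the recognition criterion of the present section. The only point demanding attention is bookkeeping: verifying that the hypotheses \emph{closed}, \emph{$C$-embedded}, \emph{Lindel\"{o}f} are exactly those consumed by Proposition \ref{P:lindelof}, and that its conclusion \emph{$C$-embedded $Z_{\tau}$-set} is exactly the input required by Corollary \ref{C:1}. Since these align, the proof is complete, and this yields the promised positive answer to the question from \cite{vds}.
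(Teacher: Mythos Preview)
Your proof is correct and matches the paper's own argument exactly: the paper simply states that the corollary follows from Corollary~\ref{C:1} and Proposition~\ref{P:lindelof}, which is precisely the two-step combination you carry out.
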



\end{document}